\documentclass[11pt]{article}

\usepackage{graphicx}
\usepackage{multicol}
\usepackage{float}
\usepackage{url}
\usepackage{caption}
\usepackage{subcaption}
\usepackage{hyperref}
\hypersetup{hidelinks}
\usepackage[english]{babel}
\usepackage{array}
\usepackage{multirow}
\usepackage{amsmath,amssymb,amsfonts,amsthm}
\usepackage{amsthm}
\usepackage{mathrsfs}
\usepackage[title]{appendix}
\usepackage{xcolor}
\usepackage{colortbl}
\usepackage{textcomp}
\usepackage{manyfoot}
\usepackage{booktabs}
\usepackage{algcompatible}
\usepackage{algpseudocode}
\usepackage{listings}
\usepackage{color,soul}
\usepackage{longtable}
\usepackage{bbm}
\usepackage{comment}

\newtheorem{theorem}{Theorem}
\newtheorem{proposition}[theorem]{Proposition}
\newtheorem{assumption}{Assumption}

\newtheorem{definition}{Definition}

\newcolumntype{P}[1]{>{\centering\arraybackslash}p{#1}}
\newcommand{\norm}[1]{\left\lVert#1\right\rVert}

\newcommand{\cX}{{\mathcal X}}
\newcommand{\cT}{{\mathcal T}}
\newcommand{\cJ}{{\mathcal J}}
\newcommand{\cF}{{\mathcal F}}

\newcommand{\cM}{{\mathcal M}}

\newcommand{\cC}{{\mathcal C}}

\newcommand{\cA}{{\mathcal A}}

\newcommand{\cB}{{\mathcal B}}
\newcommand{\cD}{{\mathcal D}}

\newcommand{\bea}{\begin{eqnarray*}}
\newcommand{\eea}{\end{eqnarray*}}

\newcommand{\inprod}[2]{\langle #1 , #2 \rangle }

\newcommand{\bc}{\begin{center}}
\newcommand{\ec}{\end{center}}

\newcommand{\K}{\mathbb K}
\newcommand{\R}{\mathbb R}
\newcommand{\N}{\mathbb N}

\renewcommand{\L}{\mathbb L}
\renewcommand{\S}{\mathbb S}

\newcommand{\ee}{\end{equation}}
\newcommand{\beaa}{\begin{eqnarray*}}
\newcommand{\eeaa}{\end{eqnarray*}}
\newcommand{\ben}{\begin{enumerate}}
\newcommand{\een}{\end{enumerate}}
\newcommand{\db}{\hspace*{\fill}{\zapf o}}

\newcommand{\bpn}{\begin{proposition}\twlsf}
\newcommand{\epn}{\db\end{proposition}}
\newcommand{\bdm}{\begin{displaymath}}
\newcommand{\edm}{\end{displaymath}}
\newcommand{\ea}{\end{array}}

\def\texitem#1{\par\smallskip\noindent\hangindent 25pt
               \hbox to 25pt {\hss #1 ~}\ignorespaces}

\def\cE{{\mathcal E}}
\def\cK{{\mathcal K}}
\def\cG{{\mathcal G}} \def\cV{{\mathcal V}}
\def\cO{{\mathcal O}} \def\cL{{\mathcal L}}
 \def\cY{{\mathcal Y}}
\def\hX{\widehat{X}} \def\hy{\widehat{y}}
\def\hZ{\widehat{Z}}

\definecolor{orgcolor}{RGB}{0,128,96}
\newcommand{\best}[1]{\cellcolor{lightgray}#1}

\usepackage[linesnumbered,ruled,vlined]{algorithm2e}
\SetAlgorithmName{Algorithm}{Algorithm}{Algorithm}

\IncMargin{0.5em}
\SetCommentSty{textnormal}
\SetNlSty{}{}{:}
\SetAlgoNlRelativeSize{0}
\SetKwInput{KwGlobal}{Global}
\SetKwInput{KwPrecondition}{Precondition}
\SetKwProg{Proc}{Procedure}{:}{}
\SetKwProg{Func}{Function}{:}{}
\SetKw{And}{and}
\SetKw{Or}{or}
\SetKw{To}{to}
\SetKw{DownTo}{downto}
\SetKw{Break}{break}
\SetKw{Continue}{continue}
\SetKw{SuchThat}{\textit{s.t}}
\SetKw{WithRespectTo}{\textit{wrt}}
\SetKw{Iff}{\textit{iff.}}
\SetKw{MaxOf}{\textit{max of}}
\SetKw{MinOf}{\textit{min of}}
\SetKwBlock{Match}{match}{}{}
\newlength{\commentWidth}
\let\oldnl\nl
\newcommand{\nonl}{\renewcommand{\nl}{\let\nl\oldnl}}

\def\SGSadmm{{\sc sGS-PADMM }}
\def\decompadmm{{\sc McADMM }}
\def\SNCGadmm{{\sc SNCG-PADMM }}
\def\SDPNALplus{{\sc Sdpnal+ }}
\def\lam{\lambda}

\def\grad{\nabla}

\def\sig{\sigma}
\def\diag#1{\mbox{diag}(#1)}

\renewcommand{\xi}{w}

\begin{document}

\title{McADMM: A Multi-Clique Augmented Lagrangian-Based Algorithm for Large-Scale Sparse SDPs with Bound Constraints}

 \author{Kristo Nugraha Lian\thanks{Department of Mathematics, National University of Singapore, Singapore ({\tt kristo@u.nus.edu})}, \;
         Nehal Ahmed Shaikh\thanks{Independent researcher, Hyderabad, Sindh, Pakistan (\texttt{nehalahmedshaikh@gmail.com})}, \;
         Di Hou\thanks{(Corresponding author) Department of Mathematics, National University of Singapore, Singapore ({\tt dihou@u.nus.edu})}, \;
         Xingyu Xie\thanks{Department of Mathematics, National University of Singapore, Singapore ({\tt xyxie@nus.edu.sg})}, \;
         Kim-Chuan Toh\thanks{Department of Mathematics, National University of Singapore, Singapore ({\tt mattohkc@nus.edu.sg})}}

\maketitle

\begin{abstract}
    \SGSadmm \; \cite{STY2015,LST2015,CST2015} is a powerful and versatile class of convergent multi-block ADMM solvers for implementations on moderate-sized linear semidefinite programming (SDP) problems. In this paper, we further enhance this class of algorithms for solving SDP problems by proposing a new multi-clique decomposition approach, allowing substantial improvements in applications on large-scale sparse SDPs (e.g., where $n > 1000$) with conducive aggregate sparsity patterns. Our SDP decomposition strategy mainly {aims to reduce the estimated PSD projection cost after decomposition}, in contrast to common decomposition algorithms that are encumbered with minimizing the overlaps between cliques. This feature is made possible by our novel \textit{linear-space projection} approach that is capable of efficiently processing a large number of overlap constraints via simple averaging steps. For the numerical experiments, we demonstrate the performance of our solver -- named McADMM for Multi-clique ADMM -- on a number of large-scale SDP instances that arise from relaxations of some important quadratically constrained quadratic programming (QCQP) problems. The performance of McADMM is contrasted against other state-of-the-art decomposition-based solvers as well as the non-decomposed \SGSadmm to highlight our key contributions. We additionally develop a GPU implementation of McADMM and demonstrate that it can substantially accelerate the decomposed solver.
\end{abstract}

\bigskip
\noindent
{\bf Keywords}: SDP, Clique Decomposition, Chordal Decomposition, ADMM

\section{Introduction} We consider the following linear SDP with bound constraints:
\begin{align}\label{eq.1}
    \min_{X}	\hspace{0.1cm}   & \inprod{C}{X}  \nonumber\\[5pt]
    \text{s.t.}     \hspace{0.1cm}  &	 \cA (X) = b, \quad X\in \S^{n}_+, \quad X\in \N^{n},
\end{align}
where $C \in \S^{n}$, $b \in \R^m$ and $\cA (X) =\left[\inprod{A_1}{X}, \inprod{A_2}{X}, \ldots, \inprod{A_m}{X} \right]^\top$ is a given linear map on the matrix variable $X \in \S^{n}$. Furthermore, $\S^{n}_+$ denotes the cone of positive semidefinite (PSD) matrices and $\N^{n}$ is a set corresponding to the bound constraints on $X$. That is,  $\N^{n} = \{ X \in \S^{n} \; : \; L \leq X \leq U \}$ for some given {symmetric lower and upper bounds} $L,U \in {(\R\cup\{-\infty,+\infty\})^{n\times n}}$. We allow $L$ and $U$ to contain $-\infty$ and $\infty$ elements, respectively, so that some elements of $X$ may not have bound constraints. Doubly non-negative SDP problems are special cases of \eqref{eq.1}, with $L = 0_{n\times n}$ and $U = \mathbf{\infty}_{n\times n}$.

Under certain conditions on its data sparsity structure, it is possible to convert \eqref{eq.1} into a multi-block SDP whose primal variables exhibit a \textit{quasi-block diagonal} structure. The term \textit{quasi-block diagonal} refers to a sparsity structure whose non-zero elements are contained in smaller, possibly non-contiguous sub-blocks that may include some overlaps -- see Figure~\ref{fig:quasi-block} for an illustration.

\begin{figure}
    \begin{center}
        \includegraphics[width = 0.31\textwidth]{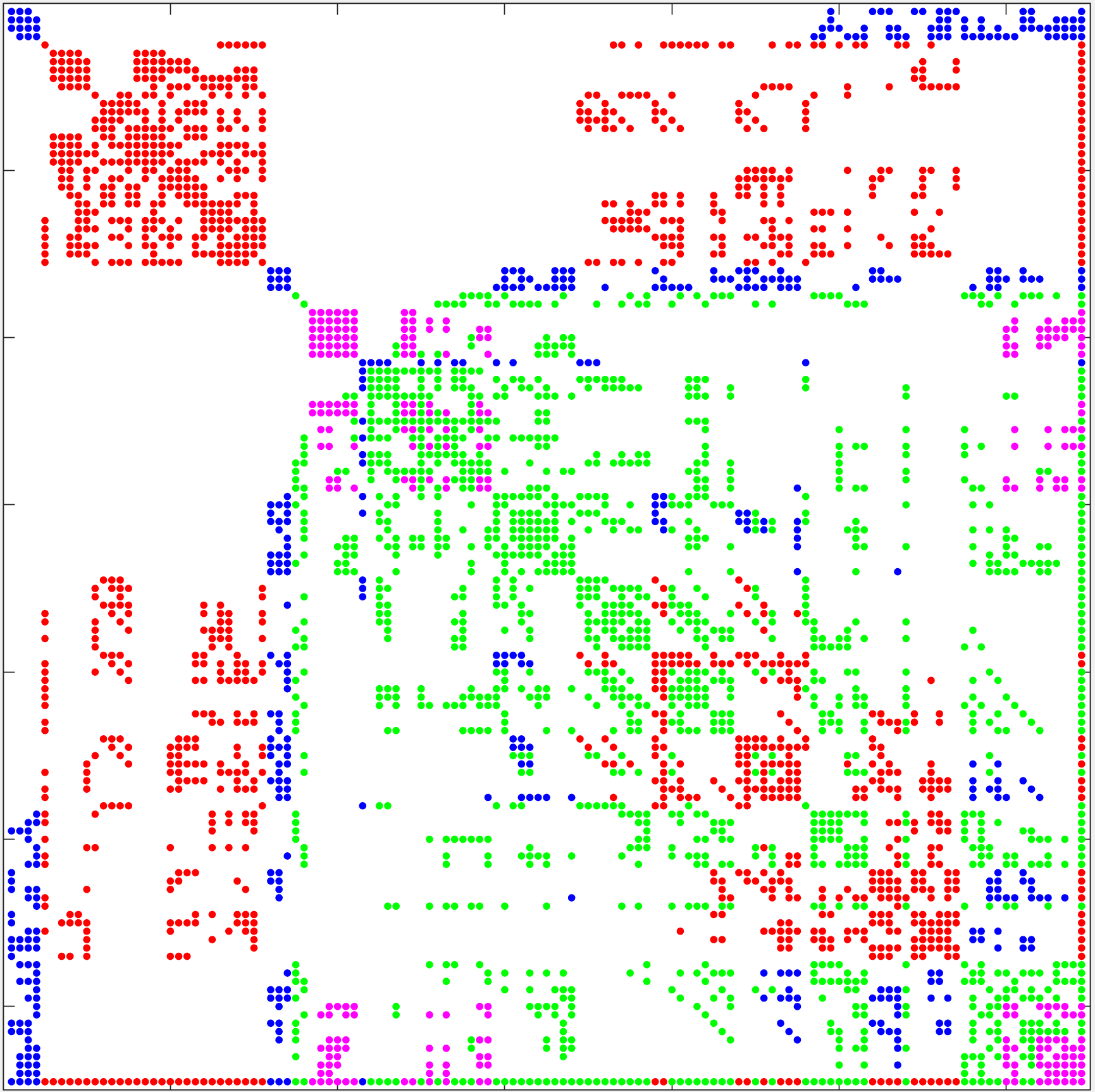}
        \caption{Illustration of a \textit{quasi-block diagonal} structure. This pattern corresponds to the Lovász theta SDP instance 1dc.128 upon \textit{chordal decomposition} and reordering as discussed in Section~\ref{sec-conversion}. The clique assignments -- based on our CMD algorithm in Section~\ref{subsec-3.2} -- are color coded.}\label{fig:quasi-block}
    \end{center}
\end{figure}

\subsection{A multi-block SDP} To facilitate a multi-block SDP whose primal variables exhibit a \textit{quasi-block diagonal} structure, we present a class of multi-block SDPs in this subsection.

Let $\cX=\cX^{(1)}\times\ldots\times\cX^{(p)}$, where each $\cX^{(l)}=\S^{n_l}$. For each $l$, we let $\N^{(l)} = \{ X^{(l)} \in \cX^{(l)} \mid L^{(l)} \leq X^{(l)} \leq U^{(l)} \}$ where $L^{(l)}$, $U^{(l)}$ are given matrix bounds. Once again, elements of the matrices $L^{(l)}$ and $U^{(l)}$ are allowed to be $-\infty$ and $\infty$ respectively to represent unconstrained entries.

Consider the following multi-block linear SDP with $p$ blocks of variables $X^{(1)},\ldots,X^{(p)}$:
\begin{align}
    \min_{X^{(1)}, \ldots, X^{(p)}} \hspace{0.1cm} &  \sum_{l=1}^p\inprod{C^{(l)}}{X^{(l)}} \nonumber \\
    \mbox{s.t.} \;&  \sum_{l=1}^p \cA^{(l)} (X^{(l)}) = b, \quad X^{(l)} \in \S^{n_l}_+, \; X^{(l)} \in \N^{(l)} \nonumber \\
    & \cL(X) = h, \quad X:=(X^{(1)},\ldots,X^{(p)}) \in \cX,
    \label{eq.2}
\end{align}
where $b\in \R^m$, $h\in \R^q$, and for each $l$,  $C^{(l)} \in \cX^{(l)}$ are given data, $\cA^{(l)} : \cX^{(l)} \to \R^m$ is a given linear map. We note that $\cL:\cX \to\R^q$ also describes a linear map, and is typically assigned to impose \textit{consistency}-type constraints for overlapping blocks in a quasi-block diagonal structure, which will be presented later. As demonstrated in the following sections, this structural distinction of linear constraints easily conforms to the multi-block structure of our algorithms, on top of promoting computational efficiency on problems admitting \textit{quasi-block diagonal} structures.

SDPs of the form \eqref{eq.2} are typically obtained via chordal (or clique) decomposition from those of the form \eqref{eq.1} -- in many important applications such as matrix completion problems (see Section \ref{sec-maxnorm}) and MAXCUT problems, chordal decomposition can be directly applied with no prior pre-processing. In other cases, such as the maximum stable set problems, the decomposition involves the prior sparsification steps outlined in Appendix~\ref{sec-theta}.

Note that the above SDP \eqref{eq.2} can be easily generalized by considering the constraints $ X^{(l)} \in \K^{(l)}$ in place of $ X^{(l)} \in \S^{n_l}_+$, where $\K^{(l)}\subset \cX^{(l)}$ may represent the cone of nonnegative vectors $\R^{n_l}_+$ or the second-order cone. Except for some adjustments in the projection steps, the structure of our main algorithms (Algorithms~\ref{alg:mcadmm-general} and~\ref{alg:mcadmm-bounded}) would remain unchanged in these cases. Thus for simplicity, our discussions in this paper would mainly be restricted to the SDP \eqref{eq.2}.

\subsection{The convergent multi-block ADMM and other solvers} To solve \eqref{eq.1} or \eqref{eq.2} with $p= 1$, a powerful class of multi-block ADMM with proven convergence (see \cite{LST2015} \cite{STY2015}, and \cite{CST2015}) -- referred here as \SGSadmm -- has been well established. One notable variant is known as \SDPNALplus \cite{YST2015} -- this algorithm involves a semismooth Newton-CG approach to solve the ADMM subproblems and exhibits excellent performance on SDPs of moderate size (say, with $n \sim 1000$) but with a large number of equality constraints (say, with $m > 10^5$). To improve the scalability of the above solvers against the matrix dimension $n$, we propose the use of clique decomposition technique introduced in \cite{FKM2000,NFF2003} to exploit any sparsity structure present in the data and to decompose the original problem \eqref{eq.1} into the form \eqref{eq.2} whose matrices are of smaller dimensions.

As discussed in Section~\ref{sec-conversion}, this decomposition possibly introduces a large number of additional linking equality constraints to decouple the overlapping blocks into independent blocks. However, we should emphasize that the additional linking constraints introduced will not create substantial computation burden to the algorithms we will design. This is in sharp contrast to interior-point method (IPM) based solvers for which the computational and memory cost will increase drastically with the number of additional linking (or consistency) constraints \cite{NFF2003}.

While the multi-block SDPs obtained from our clique decompositions (see Section~\ref{sec-clique-decomp}) generally retain the structure presented in \eqref{eq.2}, the algorithm presented in this paper can be extended to applications on an even more general model given as follows:
\begin{align}
    \min_{X^{(1)}, \ldots, X^{(p)}} \hspace{0.1cm} &  \sum_{l=1}^p\Bigl(\inprod{C^{(l)}}{X^{(l)}} + f^{(l)}(X^{(l)}) + g^{(l)}(X^{(l)})\Bigr) \nonumber \\
    \mbox{s.t.} \; &  \sum_{l=1}^p \cA^{(l)} (X^{(l)}) = b \nonumber \\
    & \cL(X) = h, \quad X:=(X^{(1)},\ldots,X^{(p)}) \in \cX,
    \label{eq.3}
\end{align}
where $f^{(l)}: \cX^{(l)} \to (-\infty,\infty]$ and $g^{(l)}: \cX^{(l)} \to (-\infty,\infty]$ are proper closed convex functions. The key requirement we need for $f^{(l)}$ and $g^{(l)}$, $l=1,\ldots,p$, is that their proximal mappings can be computed efficiently. It is easy to see that the above model \eqref{eq.3} includes the problem \eqref{eq.2} as a special case when we take $f^{(l)}(X^{(l)}) = \delta_{\S^{n_l}_+}(X^{(l)})$ and $g^{(l)}(X^{(l)}) = \delta_{\N^{(l)}}(X^{(l)})$. The indicator function $\delta_{\S^{n_l}_+}:\cX^{(l)}\to (-\infty,\infty]$ is defined by $\delta_{\S^{n_l}_+}(X^{(l)}) = 0$ if $X^{(l)}\in \S^{n_l}_+$, and $\infty$ otherwise. The function $\delta_{\N^{(l)}}(\cdot)$ is defined similarly.

For applications on SDPs of the form \eqref{eq.2}  with $\N^{(l)} = \cX^{(l)}$ for all $l$ (linear SDPs without explicit bound constraints), \cite{FKM2000} and \cite{KKMY2011} proposed to apply a primal-dual IPM to solve the multi-variable SDP derived using their clique decomposition strategy. However, as we remarked, the IPM approach proves to be expensive due to the large number of additional linking constraints introduced. Furthermore in \cite{SAV2014}, the authors proposed to apply a Douglas-Rachford splitting method on the multi-variable problem for which the key computation in each iteration is the solution of a least-squares SDP problem satisfying the first set of linear constraints $\sum_{l=1}^p \cA^{(l)} (X^{(l)}) = b$ and cone constraints $X^{(l)} \in \S^{n_l}_+$ for all $l=1,\ldots,p$ in \eqref{eq.2}. While \cite{SAV2014} specifically designed a dedicated IPM to reduce the cost of solving the least-squares SDP in each iteration, unfortunately the overall cost can still be quite high.

\subsection{Notation and preliminaries} In all the following sections, we shall discuss only SDPs of the decomposed form \eqref{eq.2} or \eqref{eq.3}. {We denote the $i$th standard basis vector by $e_i$.} Let $[n]:=\{1,2,\ldots,n\}$ for any positive integer $n$. We {use the following product-space inner product and norm on $\cX$}
\[
\langle X,Y\rangle_{\cX}
:=\sum_{l=1}^p\langle X^{(l)},Y^{(l)}\rangle_F,
\qquad
\norm{X}_{\cX}^2
:=\sum_{l=1}^p\norm{X^{(l)}}_F^2.
\]
For notational simplicity, it is useful to express these multi-variable SDPs in the standard form by defining $ \mathscr{C}=(C^{(1)},\ldots,C^{(p)})$ and define for $X =(X^{(1)},\ldots,X^{(p)}) \in \cX$,
\begin{eqnarray}
    \begin{array}{l}
        \mathscr{A}(X) = \sum_{l=1}^p \cA^{(l)}(X^{(l)}),  \quad
        \mathscr{F}(X) = f^{(1)} (X^{(1)}) + \cdots +  f^{(p)}(X^{(p)})
        \\[5pt]
        \mathscr{G}(X) = g^{(1)} (X^{(1)}) + \cdots +  g^{(p)}(X^{(p)}).
    \end{array}
    \label{eq.4}
\end{eqnarray}
With this, \eqref{eq.3} can be written compactly as:
\begin{align}
    \min_X \hspace{0.1cm} & \inprod{\mathscr{C}}{X} + \mathscr{F}(X) + \mathscr{G}(X) \nonumber\\
    \mbox{s.t.} \;& \mathscr{A}(X) = b, \quad
    \cL (X) = h, \quad X \in \cX.
    \label{eq.5}
\end{align}
Its dual problem (ignoring the minus sign in front of the minimization) is given by
\begin{align}
    \min_{y, \xi, S, Z} \hspace{0.1cm} & \inprod{-b}{y} + \inprod{-h}{\xi} + \mathscr{F}^*(-S) + \mathscr{G}^*(-Z)
    \nonumber \\
    \mbox{s.t.} \;& \mathscr{A}^* y + \cL^* \xi + S + Z = \mathscr{C},
    \label{eq.6}
\end{align}
where $S = (S^{(1)},\ldots,S^{(p)})$, $Z = (Z^{(1)},\ldots,Z^{(p)})$ and
\begin{eqnarray}
    \mathscr{F}^*(S) & :=& (f^{(1)})^*(S^{(1)}) + \cdots +  (f^{(p)})^*(S^{(p)})  ,
    \\[5pt]
    \mathscr{G}^*(Z) & :=& (g^{(1)})^*(Z^{(1)}) + \cdots +  (g^{(p)})^*(Z^{(p)}).
\end{eqnarray}

Throughout this paper, we will assume that the problems \eqref{eq.5} and \eqref{eq.6} have a solution satisfying the KKT optimality conditions:
\begin{eqnarray}
    \begin{array}{lcl}
        0 &=& \mathscr{A}^* y + \cL^*\xi + S + Z -\mathscr{C} \\[5pt]
        0 &= & \mathscr{A} (X) -b, \quad \quad \; \; \; 0 = \cL (X) -h \\[5pt]
        0 &\in &\partial\mathscr{F}^* (-S) -X, \quad 0 \;\in\; \partial \mathscr{G}^* (-Z) -X.
    \end{array}
    \label{eq.9}
\end{eqnarray}

In later part of this paper, we will make use of the Moreau decomposition theorem for a proper closed convex function, which we present next. Let $\cY$ be a finite dimensional real inner product space equipped with the induced norm, and $f:\cY \to (-\infty,\infty]$ be a proper closed convex function. The proximal map and the Moreau-Yosida regularization associated with $f$ are defined by
\begin{eqnarray*}
    {\rm Prox}_f (x) = \mbox{argmin}_{z\in \cY} \Big\{ f(z) + \frac{1}{2}\norm{z-x}^2 \Big\},\quad
    M_f(x) =  \mbox{min}_{z\in \cY} \Big\{ f(z) + \frac{1}{2}\norm{z-x}^2 \Big\}, \quad
    x\in \cY.
\end{eqnarray*}
It is known that $M_f(\cdot)$ is continuously differentiable on $\cY$, with
$$
\grad M_{ f}(x) = x- {\rm Prox}_{ f}(x).
$$
We have the following fundamental identities due to Moreau \cite{Moreau1962}: for any $x\in \cY$, it holds that
\begin{eqnarray}
    {\rm Prox}_{\sig f}(x) + \sig {\rm Prox}_{\sig^{-1}f^* }(\sig^{-1}x) &=& x,
    \label{eq-Moreau}
    \\[5pt]
    M_{\sig f}(x) + \sig^2 M_{\sig^{-1}f^*}(\sig^{-1} x) &=& \frac{1}{2}\norm{x}^2,
    \label{eq.10}
\end{eqnarray}
where $\sig >0$ is a given parameter, and $f^*$ is the Legendre–Fenchel conjugate of $f$.

\subsection{Contributions} The main contributions of this paper are as follows:
\begin{enumerate}
\item Our solver can process single or multi-block SDPs of the form \eqref{eq.3}, which is a significant generalization to those considered in the literature. In particular, any clique-decomposed SDPs with bound constraints can be solved efficiently and with improved scalability compared to other first-order augmented Lagrangian based methods and most notably, IPM based ones. In our solver, we fully exploit clique decomposition by decomposing both the objective data and all constraints. This contrasts with the more common approach of decomposing only the \(\mathbb{S}^n_+\) constraint, as proposed, for example, in \cite{ZFP2020}.

\item In most if not all decomposition based SDP solvers, the previously discussed linking constraints that ensure variable consistency across different blocks are typically encoded as linear \textit{consistency} constraints (see, e.g., \cite{FKS2017} and \cite{ZFP2020}). In section \ref{sec-linproj}, we propose a \textit{linear-space projection} strategy to efficiently impose these constraints such that their updates in the algorithm come down to a simple averaging operation across all blocks. Furthermore, we demonstrate that such an operation can be done concurrently with the updating step pertaining to the bound constraint (projection to $\N^n$). In this regard, our approach intelligently overcomes the major challenge that is the main hindrance of decomposition-based SDP solvers.

\item Another benefit of our \textit{linear-space projection} method is reflected in the clique decomposition stage. Here, we propose a heuristic for maximal clique distribution upon chordal decomposition that differs from the commonly used heuristics (e.g. SparseCoLO \cite{KKMY2011}). More precisely, we design a decomposition algorithm -- called CMD -- {that greedily merges cliques to reduce the estimated PSD projection cost}. In contrast, older heuristics such as those in SparseCoLO are more restricted as they must keep the number of clique overlaps in check to avoid excessive computational burden incurred by the linking constraints.
\item Our solver, called McADMM, is versatile and capable of accommodating \textit{any} type of decomposition, including non-chordal ones such as the block-factor-width decompositions (see e.g., \cite{ZSP2019} and \cite{ZFP2021}). Indeed, the structure of our main algorithm (Algorithm~\ref{alg:mcadmm-general}) is independent of the decomposition strategies developed in Section~\ref{sec-clique-decomp}.
\item We demonstrate that the multi-clique structure of McADMM is well suited to GPU acceleration. Our implementation parallelizes the overlap projection and the clique-wise PSD projections on GPUs. Computational results show substantial acceleration on sufficiently large instances, with several-fold median CPU-to-GPU speedups across the tested problem classes.
\end{enumerate}

\subsection{Organization} \label{subsec-organization} This paper is organized as follows. Section~\ref{sec-conversion} introduces the aggregate sparsity pattern and converts a single-block SDP into a multi-block structured SDP, including the consistency constraints induced by clique overlaps. Section~\ref{sec-clique-decomp} develops clique decomposition strategies for arrowhead-patterned and general sparse SDPs, including the proposed CMD heuristic. Section~\ref{sec-computational} presents the implementation details for overlap compensation and the linear-space projection. Section~\ref{sec-ADMM} develops McADMM for general and bounded multi-block SDPs. Section~\ref{sec-numerical} reports the numerical experiments, including comparisons with existing solvers and the CPU--GPU results. Finally, Section~\ref{sec-summary} concludes the paper.

\section{Conversion from single to multi-block structured SDP} \label{sec-conversion} Non-trivial conversions such as the one depicted in Figure~\ref{fig:quasi-block} are attainable provided \eqref{eq.1} has a conducive \textit{aggregate sparsity pattern}. First, we shall provide a more precise definition of the aggregate pattern based on the SDP parameters $C, A_1,\ldots,A_m\in \S^n$ in \eqref{eq.1}, together with the bounds $L, U \in (\R\cup\{-\infty,\infty\})^{n \times n}$. The first description can be presented in terms of the following matrix:
\begin{align}\label{eq.12}
    M^{\text{par}} = |C| + \mbox{$\sum_{k=1}^m$} |A_k|,
\end{align}
where for example, $|C|$ denotes the matrix obtained by taking the absolute value of all elements of $C$. Furthermore,
\begin{align}\label{eq.13}
    M_{ij}^{\text{bound}} =  \begin{cases}  1 \quad \text{ if }  -\infty < L_{ij} \text{ or } U_{ij} <\infty
    \\    0 \quad \text{ otherwise. }\end{cases}
\end{align}
The aggregate pattern matrix $M^{\text{agg}}$ may be defined as $M^{\text{par}}+M^{\text{bound}}$; however, a more useful representation is a graph $\cG(\cV,\cE)$ where $\cV = \{1, 2, \ldots, n \}$ and the set of edges $\cE$ encodes $M^{\text{agg}}$. In other words,
\begin{align}\label{eq.14}
    \cE = \{ (i,j) \mid M^{\text{par}}_{ij} \not= 0, \; i<j \}\cup \{ (i,j) : M^{\text{bound}}_{ij} \not = 0 , \; i <j\}.
\end{align}
In the literature, $\cE$ is also referred to as the set of structurally non-zero entries across all data matrices and bounds. In practice, the aggregate pattern is the set of index pairs $(i,j)$ whose corresponding matrix entry $X_{ij}$ is relevant in the computation of the objective function as well as the affine/bound constraints in the original SDP.

\subsection{Clique decomposition of \texorpdfstring{$\mathbb{S}^n_+$}{S n plus}} \label{subsec-clique-conversion} Having established an SDP's aggregate sparsity pattern $\cE$ and its corresponding graph representation $\cG(\cV,\cE)$, conversion into SDP with \textit{quasi-block diagonal} structure can be made using some appropriate clique decomposition \cite{BP1993} and reordering. If $\cG(\cV,\cE)$ is chordal\footnote{The graph $\cG(\cV,\cE)$ is said to be chordal if every cycle of length at least 4 has a chord, which is an edge between two non-consecutive vertices in a cycle.}, one may choose a family of \textit{maximal cliques} (i.e., cliques that are not subgraphs of any other cliques) $\cC_1, \cC_2, \ldots, \cC_p \subseteq \cV$ for some $p \leq n$ such that $\cV= \cup_{l=1}^p \cC_l$ and $\cE \subseteq \cup_{l=1}^p \cC_l\times \cC_l$ and the resulting converted SDP problem is equivalent to the original SDP problem \eqref{eq.1}.

{When used as a matrix sparsity pattern, $\cE$ includes all diagonal positions and both $(i,j)$ and $(j,i)$ for each edge.} To facilitate further discussions, it is useful to make use of the following notation introduced in \cite{NFF2003}:
\begin{align}
    & \S^n(\cE,0)  =\{  X \in \S^n  :  X_{ij} =0 \hspace{0.2cm}  \text{for } (i,j) \notin \cE, \; i\neq j \},\label{eq.15}  \\[7pt]
    &\S^n_+(\cE,0)  = \mathbb{S}^n(\cE,0) \cap \mathbb{S}^n_+, \label{eq.16} \\[7pt]
    & \S^n(\cE,?) =  \mbox{the set of $n \times n$ symmetric matrices with elements specified in $\cE$},
    \\[7pt]
    &\S^n_+(\cE,?) = \{ X \in \mathbb{S}^n(\cE,?) :
    \exists\; M \succeq 0\; \mbox{such that}\; M_{ij} = X_{ij} \forall \; (i,j)\in\cE\}.\label{eq.17}
\end{align}
Here, $ \S^n(\cE,0)$ is the set of $\cE\text{-patterned $\S^n$ matrices}$, $\S^n_+(\cE,0)$ is the set of $\cE\text{-patterned PSD matrices}$ and $\S^n_+(\cE,?)$ is the set of PSD-completable matrices. Note that an element $X \in \mathbb{S}^n_+(\cE,?)$ has \textit{specified} entries in $\cE$ but may otherwise admit any sparsity pattern, though it is necessary that a PSD matrix must be recovered by possibly modifying (the unspecified) entries of $X$ in the complement $\cE^c$.

Assume that $\cG(\cV,\cE)$ is a chordal graph with a selected family of maximal cliques $\cC_1, \cC_2, \ldots, \cC_p$. Since these cliques cover $\cE$, one may relegate an analysis of the entire $\cG(\cV,\cE)$ to independent analyses of each clique. This idea was formalized in the work of \cite{Grone} where a sufficient condition for $X \in \mathbb{S}^n_+(\cE,?)$ is formulated in terms of submatrices of $X$.

\begin{theorem}{(Grone, 1984 \cite{Grone}):}
    \label{thm-1}
    $X \in \mathbb{S}^n_+(\cE,?) $ if and only if
    \begin{align*}
        X^{(l)}  := E_{\mathcal{C}_l} X E^\top_{\mathcal{C}_l} \in \mathbb{S}^{|\mathcal{C}_l|}_+, \quad \forall\; l=1,\ldots,p,
    \end{align*}
    where $E_{\mathcal{C}_l}$ is a $0$--$1$ matrix in $\R^{t \times n}$ with $t=|\cC_l|$, defined as follows: let the elements of $\cC_l$ be denoted as $\cC_l(1) < \ldots < \cC_l(t)$, then
    \begin{align}\label{eq.18}
        (E_{\cC_l})_{ij} = \begin{cases} 1 \hspace{0.2cm} \text{ if } j = \cC_l(i) \\ 0 \hspace{0.2cm} \text{ otherwise,}\end{cases} \text{for all } i = 1, \ldots t \text{ and } j = 1, \ldots, n.
    \end{align}
\end{theorem}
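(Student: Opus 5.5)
Necessity is immediate. If $X\in\S^n_+(\cE,?)$, pick a PSD completion $M\succeq 0$ with $M_{ij}=X_{ij}$ for all $(i,j)\in\cE$. Each clique $\cC_l$ satisfies $\cC_l\times\cC_l\subseteq\cE$, counting diagonal positions, so $E_{\cC_l}XE_{\cC_l}^\top = E_{\cC_l}ME_{\cC_l}^\top$. This is a principal submatrix of $M$, hence PSD.

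For sufficiency I will use the chordal structure. Since $\cG(\cV,\cE)$ is chordal, its maximal cliques admit an ordering $\cC_1,\dots,\cC_p$ with the \emph{running intersection property}: for each $k\ge 2$ there is $j<k$ with $\cC_k\cap(\cC_1\cup\cdots\cup\cC_{k-1})\subseteq \cC_j$. This is the standard clique-tree characterization of chordal graphs, and I will quote it, since $\cE\subseteq\cup_l \cC_l\times\cC_l$ and the $\cC_l$ are exactly the maximal cliques. I will then argue by induction on $k$ that the partial matrix restricted to $V_k:=\cC_1\cup\cdots\cup\cC_k$ has a PSD completion $M_k$ agreeing with $X$ on $\cup_{l\le k}\cC_l\times\cC_l$. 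The base case $k=1$ is the hypothesis $X^{(1)}\succeq 0$.

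The inductive step is the main work. It reduces to a two-block completion lemma. Write $V_{k+1}=A\cup B\cup D$ with $B=\cC_{k+1}\cap V_k$, $A=V_k\setminus B$ and $D=\cC_{k+1}\setminus B$. Entries between $A$ and $D$ are unspecified, and both $\begin{pmatrix}M_{AA}&M_{AB}\\ M_{BA}&M_{BB}\end{pmatrix}\succeq0$ (from $M_k$) and $\begin{pmatrix}X_{BB}&X_{BD}\\ X_{DB}&X_{DD}\end{pmatrix}\succeq0$ (from $X^{(k+1)}$) hold, where $M_{BB}=X_{BB}$ because $B\subseteq\cC_j$ for some $j\le k$. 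I will fill the missing block with $M_{AD}:=M_{AB}X_{BB}^{\dagger}X_{BD}$. To check that the completed $3\times3$ block matrix is PSD, I will factor each PSD block as a Gram matrix. Take vectors $u_a,u_b$ realizing the first block and $v_b,v_d$ realizing the second, with $u_b$ and $v_b$ both realizing $X_{BB}$. After an orthogonal change of coordinates one may take $u_b=v_b$. Projecting onto the span of the $u_b$ then shows that the resulting Gram matrix has exactly the Schur-complement form above.

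An alternative check avoids Gram vectors. Because $X_{BD}\in\mathrm{range}(X_{BB})$, and similarly for $M_{BA}$, the generalized Schur complement with respect to $X_{BB}$ is block diagonal with PSD blocks $M_{AA}-M_{AB}X_{BB}^\dagger M_{BA}$ and $X_{DD}-X_{DB}X_{BB}^\dagger X_{BD}$. This is the step I expect to be the main obstacle: handling the singular $X_{BB}$ carefully, including the range inclusions needed for the pseudoinverse Schur complement criterion, and noting the trivial case $B=\emptyset$, where one fills with zeros.

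Finally, entries of $\cE$ all lie in some $\cC_l\times\cC_l$ and are preserved at every step. Entries not in $V_p\times V_p$ do not occur since $\cV=\cup\cC_l$. So $M_p$ is a PSD matrix agreeing with $X$ on $\cE$, which proves $X\in\S^n_+(\cE,?)$.
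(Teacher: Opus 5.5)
Your proof is correct. The paper gives no proof of Theorem~\ref{thm-1}; it only quotes Grone et al., so the relevant comparison is with the classical argument.

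That argument works one \emph{entry} at a time. A chordal graph is grown to the complete graph by adding single edges, so that each intermediate graph stays chordal and each new edge lies in exactly one maximal clique. Every step is then a completion problem with a single unknown symmetric pair inside one clique. It is usually carried out for positive definite data, with the semidefinite case recovered by perturbing the diagonal and passing to a limit.

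You instead add one \emph{clique} at a time along a running-intersection ordering, with the explicit fill $M_{AD}=M_{AB}X_{BB}^{\dagger}X_{BD}$. The range inclusions $\mathrm{range}(M_{BA}),\mathrm{range}(X_{BD})\subseteq\mathrm{range}(X_{BB})$ hold automatically for PSD blocks. Hence the generalized Schur complement with respect to $X_{BB}$ is block diagonal with PSD blocks, and a singular $X_{BB}$ needs no limiting argument. The completion is explicit; for positive definite clique blocks it is the maximum-determinant one. Your bookkeeping also matches the clique-tree/RIP framework the paper uses later. Your induction hypothesis (agreement on $\cup_{l\le k}\cC_l\times\cC_l$), combined with $B\subseteq\cC_j$, is exactly what prevents an entry filled earlier from clashing with a later clique. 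The edge-by-edge route offers in return a single scalar unknown per step. It pays for this with a graph-theoretic lemma on chordal supergraphs and a separate limiting argument for the semidefinite case.

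Two small corrections.

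First, the running intersection ordering comes from chordality applied to the \emph{full} set of maximal cliques, not from the covering property $\cE\subseteq\cup_l\cC_l\times\cC_l$ that you cite. For a covering subfamily the statement can fail. Take the triangle $\{1,2,3\}$ together with vertices $4,5,6$ adjacent to $\{1,2\}$, $\{1,3\}$, $\{2,3\}$ respectively, and the family $\{1,2,4\},\{1,3,5\},\{2,3,6\}$. Use unit diagonal, $X_{12}=X_{13}=X_{23}=-1$, and all other specified entries $0$. Each clique block is PSD, yet the fully specified block on $\{1,2,3\}$ has eigenvalue $-1$. Since you explicitly take the $\cC_l$ to be all maximal cliques, your argument is unaffected.

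Second, in the Gram-vector variant, any common embedding with $u_b=v_b$ already yields a PSD completion, which is all you need. To obtain precisely the fill $M_{AB}X_{BB}^{\dagger}X_{BD}$, you must additionally place the components of the $u_a$ and of the $v_d$ orthogonal to $\mathrm{span}\{u_b\}$ in mutually orthogonal subspaces.
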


From the definition, the operation $E_{\mathcal{C}_l} X E^\top_{\mathcal{C}_l}$ extracts a submatrix $X^{(l)}$ of $X$ by extracting the entries of the rows and columns specified by $\mathcal{C}_l$.
Theorem 1 implies that in solving an SDP with chordal sparse aggregate matrix, the constraint $X \in \S^n_+$ is attainable by imposing the constraint  $X^{(l)} \in \S^{|\cC_l|}_+$ in the smaller cones $\S^{|\cC_l|}_+$ for $l=1,\ldots,p$. More precisely, positive-semidefiniteness of all the $X^{(l)}$ variables guarantees that $X \in\mathbb{S}^n_+(\cE,?)$. While this does not mean that the optimal solution of the original SDP can be directly recovered in the converted SDP, it is guaranteed that the original optimal value will be obtained by the latter; this is true because entries on $\cE^c$ have no relevance in both the objective and the constraints.

One may extend the benefits of the above results for non-chordal patterned SDPs by means of chordal completion/fill-ins. In practice, the sparsity information of an SDP is first extracted in the manner of \eqref{eq.12}--\eqref{eq.14}. This is followed by node reordering using some heuristics -- we typically use the approximate minimum degree (amd) ordering -- to retain maximum sparsity upon the subsequent chordal completion. The latter can be directly executed, for example, by running {\sc Matlab}'s {\tt chol} function on {a symmetric positive definite matrix with the same sparsity pattern as} the reordered $M^{\text{agg}}$.

Since the above chordal completion is always applied, in the rest of this paper we shall assume that all SDP instances are imbued with a chordal pattern $\cE$ and one can readily choose a corresponding family of covering maximal cliques $\cC_1, \cC_2, \ldots, \cC_p$. Furthermore, we refer to quantities such as $X^{(l)}$ as \textit{clique variables} due to their definition in Theorem 1; note that each clique variable corresponds to one sub-block of $X$ as illustrated in Figure~\ref{fig:quasi-block}. In other words, the choice of maximal cliques determines the \textit{quasi-block diagonal} structure and the computation-relevant portion of $X$ is exactly given by the superposition of all $X^{(l)}$.

The selection of maximal cliques is thus crucial in designing efficient conversion schemes. At a minimum, their individual size distribution must satisfy that the total cost of imposing the constraints $X^{(l)} \in \mathbb{S}^{|\mathcal{C}_l|}_+$, $l = 1, \ldots , p$, is significantly cheaper than the cost of imposing $X \in \S^n_+$. We shall discuss our proposed strategy of clique decomposition in greater detail in Section~\ref{subsec-3.2}.

To end this section, we consider a concrete attempt of SDP decomposition to illustrate another important aspect in the conversion process: the overlap structure of the clique variables. Consider a direct application of Theorem 1 on a single-block SDP \eqref{eq.1}, where $X \in \S^n_+$ is replaced with $X \in \S^n_+(\cE,?)$ i.e., $X^{(l)} \in \mathbb{S}^{|\mathcal{C}_l|}_+$ for all $l = 1, \ldots, p$. For the rest of this paper, we shall write $ \S^{|\mathcal{C}_l|}_+$ as $\S^{\mathcal{C}_l}_+$ for notational simplicity and also define
\begin{align}\label{eq.19}
\N^{\cC_l}    & =  \{ Y^{(l)}\in \S^{\cC_l} : L_{ij} \leq Y^{(l)}_{ij} \leq U_{ij}  \hspace{0.2cm}  \text{for }  (i,j)\in \cC_l\times \cC_l\}.
\end{align}
Note that unlike the $\S^n_+$ constraint, entrywise bound constraints are always separable. With these, we may now attempt a direct SDP decomposition as follows:
\begin{eqnarray*}
\begin{array}{rl}
\min &  \sum_{l=1}^p  \langle C^{(l)},X^{(l)} \rangle \\[5pt]
\text{s.t.} &   \sum_{l=1}^p  \langle A^{(l)}_i, X^{(l)}\rangle \;=\;b_i,\quad i=1,\ldots,m
\\[5pt]
& ( X^{(1)}, \ldots,  X^{(p)})  \in  \S^{\mathcal{C}_1}_+ \times \cdots \times \S^{\mathcal{C}_p}_+, \quad
( X^{(1)}, \ldots,  X^{(p)}) \in \N^{\mathcal{C}_1} \times \cdots \times \N^{\mathcal{C}_p},
\end{array}
\end{eqnarray*}
where the last constraint comes from $X \in \N^n$. However, the above generally does not recover a solution in $\S^n_+(\cE,?)$ since each $X^{(l)}$ is (inaccurately) treated as independent variables. The optimal solutions of the above decomposed problem will coincide with those of \eqref{eq.1} only in the special case where $\cC_{lr} := \cC_l\cap \cC_r =\emptyset$ for all $l\not=r$. For a general \textit{quasi-block diagonal} pattern, some additional constraints must be included to ensure that entries in any nonempty intersection $\cC_l \cap \cC_r$ are consistent, i.e., admit the same values in $X^{(l)}$ and $X^{(r)}$ for all entries corresponding to the nonempty intersection $\cC_l \cap \cC_r$. Details on the constructions of these consistency constraints are described in Section~\ref{subsec-2.2}. We remark that such a construction is called the conversion method in \cite{FKM2000} and the d-space conversion method using clique trees in
\cite{KKMY2011}.
By imposing the consistency constraints {for $(\cC_l,\cC_r)\in\cF$}, we get a multi-block SDP of the form:
\begin{eqnarray}
\begin{array}{rl}\label{eq.20}
\min &  \sum_{l=1}^p  \langle C^{(l)},X^{(l)} \rangle \\[5pt]
\text{s.t.} & \sum_{l=1}^p  \langle A^{(l)}_i, X^{(l)}\rangle \;=\; b_i, \quad i=1,\ldots,m
\\[5pt]
& \cL(X^{(1)},\ldots,X^{(p)}) \;=\; 0,
\\[5pt]
& ( X^{(1)}, \ldots,  X^{(p)})  \in  \S^{\mathcal{C}_1}_+ \times \cdots \times \S^{\mathcal{C}_p}_+, \quad
 ( X^{(1)}, \ldots,  X^{(p)}) \in \N^{\mathcal{C}_1} \times \cdots \times \N^{\mathcal{C}_p},
\end{array}
\end{eqnarray}
where $\cL$ {imposes the consistency constraints in \eqref{eq.25}}.

\subsection{Clique tree properties and consistency constraints} \label{subsec-2.2} A straightforward approach to ensure consistency between elements across all clique variables involves manually adding linear equalities to link the overlapping entries. Suppose that $(i,j) \in (\cC_l\times \cC_l) \cap (\cC_r\times \cC_r)$, i.e., $(i,j)$ is an entry that falls within the overlap $\cC_l \cap \cC_r$. One can directly impose $X^{(l)}_{[ij]} - X^{(r)}_{[ij]} = 0$. Here we use the notation $[ij]$ to denote the corresponding index pairs in the smaller matrices $X^{(l)}$ and $X^{(r)}$. Of course, since any such constraints are inter-clique in nature, this approach precludes the possibility of completely decomposing the problem.

Furthermore, the above set of consistency constraints is not uniquely determined from $\cC_1, \cC_2, \ldots, \cC_p$. The number of such constraints may in fact vary depending on the clique ordering. In this work, we closely follow the maximal-clique ordering strategy proposed by \cite{NFF2003} to generate the set of constraints in a consistent and predictable manner. First, we define a \textit{clique tree} $\cT(\cK,\cF)$ with $\cK = \{\cC_1, \cC_2, \ldots, \cC_p \}$ and $\cF$ that is constructed to adhere to the following properties:

\begin{definition}{}
(a) {The clique intersection property (CIP)}
				\begin{align}\label{eq.22}
				\forall \cC_l, \cC_r \in \cK, \hspace{0.1cm} \cC_l \cap \cC_r \subseteq \cC_s \text{ for every clique } \cC_s \text{ on the unique path between } \cC_l \text{ and } \cC_r .
				\end{align}
(b) {The running intersection property (RIP)}
				\begin{align}\label{eq.23}
				\forall l=1,\ldots,p-1,\; \exists r \in \{l+1,\ldots,p\}: \hspace{0.2cm} \cC_l \cap ( \cC_{l+1} \cup \ldots \cup \cC_p) \subsetneq \cC_r.
				\end{align}
\end{definition}

The purpose of CIP is twofold: it is used as a framework for the tree construction itself and it imposes a structure that allows convenient enumeration of overlapped entries. Note that the \textit{separator set} $\{\cC_l \cap \cC_r \mid (\cC_l,\cC_r) \in \cF \}$ is invariant to the choice of $\cT(\cK,\cF)$ as long as \eqref{eq.22} holds. Given one such tree, we may deduce the number of pairwise overlaps by considering only intersections of adjacent cliques, i.e., the elements of $\cF$. The number of additional constraints is given by
\begin{align}\label{eq.24}
m^+ = \sum_{(\cC_l,\cC_r) \in \cF} |\cC_l \cap \cC_r|(|\cC_l \cap \cC_r|+1)/2.
\end{align}

On the other hand, the RIP imposes a particular numbering of the cliques that respects the \textit{perfect elimination ordering} -- an ordering guaranteed to exist in a chordal graph. With condition \eqref{eq.23}, the root node will be labeled as $p$ and the numberings along any path traced from this node to any leaf node will be a decreasing sequence. Hence in practice, a concrete algorithm to determine the set of overlaps -- and thus, the additional constraints -- can be constructed; it simply involves identifying paths from the root to each leaf node.

For more information on clique trees, we refer the reader to \cite{BP1993}. For the purpose of our discussion, the above conditions are sufficient to allow a complete description of the additional constraints in a concise and cost-effective manner:
\begin{align}\label{eq.25}
X^{(l)}_{[ij]} - X^{(r)}_{[ij]} = 0   \hspace{0.2cm} \text{for } (i,j) \in (\cC_l\times \cC_l) \cap (\cC_r\times \cC_r), \;  i\leq j,\; {(\cC_l,\cC_r)\in\cF},
\end{align}
where the notation $[ij]$ refers to the corresponding index pairs in the smaller matrices $X^{(l)}$ and $X^{(r)}.$ {By the CIP, the same entry has the same value in all clique variables containing it.}

\section{Clique decomposition strategies} \label{sec-clique-decomp} Having established the theoretical aspects of maximal cliques selection and their properties, in this section we elaborate some practical strategies applicable to real applications. It is important to note that all strategies described in this section adhere to the CIP and RIP so that all previously described properties remain valid.

\subsection{Chordal decomposition for arrowhead-patterned SDPs} \label{sec-maxnorm} We begin by analyzing SDPs from the class of problems that naturally admit an \textit{arrowhead} sparsity pattern. In our work, we use this term to describe the general shape of a sparsity pattern under {\tt amd} reordering and chordal completion -- a sufficiently sparse SDP typically exhibits a structure resembling a (series of) ``arrowhead'' after this process. The max-norm matrix recovery SDP is a special case where this pattern is observed under the default ordering. We shall therefore consider these problems for a clearer illustration of the process of selecting clique distributions.

For the max-norm matrix recovery problem \cite{FLT2018}, our objective is to recover a low-rank matrix $M^*\in \R^{d_1\times d_2}$ (we assume $d_1\geq d_2$ without loss of generality) from an incomplete sample matrix $A\in \R^{d_1\times d_2}$. In other words, $A$ is obtained from $M^*$ by randomly removing some entries from the latter. The distribution of the retained entries is non-uniform and their values are possibly noise contaminated. In the noiseless case, $A_\cJ = M^*_\cJ$ and $A_{ij} =0$ for all $(i,j) \in \cJ^c$, where $\cJ$ refers to the sampled index pairs. Let $\widehat{A} = [0_{d_1\times d_1},\, A;\, A^\top,\, 0_{d_2\times d_2}]\in \S^{d_1+d_2}$ and define the corresponding lifted observation set
\[
{\widehat{\cJ}:=\{(i,d_1+j):(i,j)\in\cJ\}}.
\]
An approximate recovery can be obtained by solving the following SDP:
\begin{align}\label{eq.26}
 \min_{X , \theta } & \; \sum_{(i,j)\in {\widehat{\cJ}}} |X_{ij}-\widehat{A}_{ij}|+\lambda \theta \nonumber \\[5pt]
\mbox{s.t.} & \;\norm{M}_\infty \leq \alpha, \quad \diag{X}  \leq \theta e \nonumber\\[5pt]
  & \; X = \left[ \begin{array}{cc} W_1 &  M \\ M^\top &  W_2\end{array}\right] \in \S^{d_1+d_2}_+,  \hspace{0.2cm} \theta \in \R,
\end{align}
where $e$ is a vector of all-ones, $\alpha > 0$ is a known uniform upper bound of $M^*$ and $\lambda > 0$ is a regularization parameter that penalizes $\theta$, the common upper bound on the diagonal entries of the lifted PSD matrix $X$, as a convex surrogate for $\operatorname{rank}(M)$. Meanwhile, $\norm{M}_\infty=\max_{ij}|M_{ij}|\leq\alpha$ is a separate entrywise bound. The above SDP formulation is due to \cite{SRJ2004}.

Note that a different SDP formulation using the trace norm $\norm{M}_*$ as a surrogate function of $\text{rank}(M)$ is also common in matrix recovery. Such a formulation possesses a similar sparsity structure, but is considered unsuitable for our applications with non-uniform sampled $\cJ$ -- see for example, \cite{CZ2013} and \cite{SS2005} for further discussions.

As a concrete illustration, consider a simple case with $d_1 = 20$ and $d_2 = 10$ and the following family of two maximal cliques covering the aggregate sparsity pattern of $X$:
\begin{align*}
\cC_1    =\{1,\ldots, 10\}\cup\{21,&\ldots,30\}, \quad \cC_2 =\{11,\ldots,30\}, \; \text{ and }  
\cC_1\cap \cC_2 = \{ 21,\ldots,30 \}.
\end{align*}
With such a configuration, the structure of problem \eqref{eq.26} is illustrated in Figure~\ref{fig:1a}. The images depict the original aggregate sparsity pattern of \eqref{eq.26}, its arrowhead sparsity pattern is observed upon chordal completion without reordering, and the entries corresponding to the cliques $\cC_1$ and $\cC_2$.

\begin{figure}[ht]
    \begin{center}
        \includegraphics[clip=true,width=8.5cm,height=4.4cm]{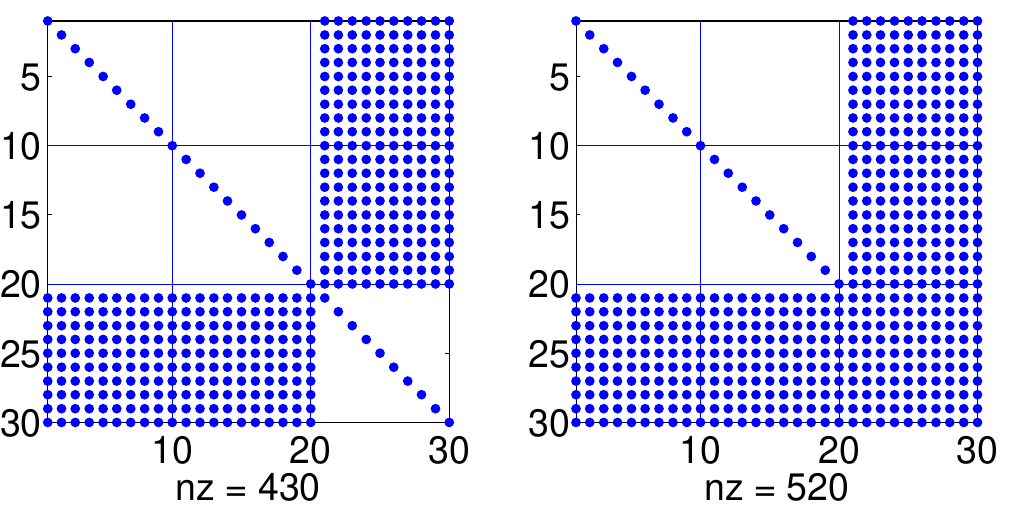}
        \\[5pt]
        \includegraphics[clip=true,width=8.5cm,height=4.4cm]{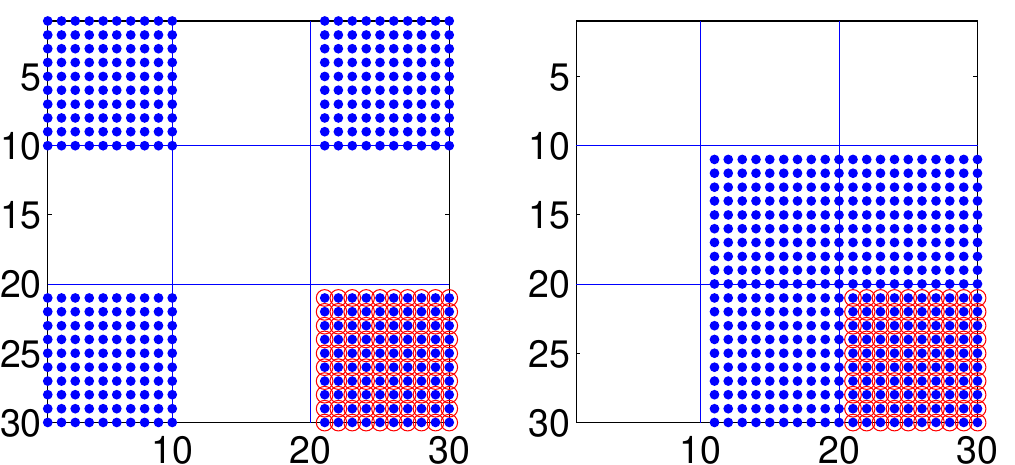}
        \caption{(Upper panel) aggregate sparsity pattern of the SDP \eqref{eq.26} and its chordal completion. (Lower panel) Entries covered by $X^{(1)}$ (left) and $X^{(2)}$ (right), where $\cC_1 = \{1,\ldots,10\}\cup\{21,\ldots,30\}$ and $\cC_2=\{ 11,\ldots,30\}$. The entries with red circles denote the intersection $(\cC_1\cap \cC_2) \times (\cC_1\cap \cC_2)$.}
        \label{fig:1a}
    \end{center}
\end{figure}

Converting \eqref{eq.26} to the corresponding two-block SDP yields:
\begin{align}\label{eq.27}
    \min_{X^{(1)} ,X^{(2)} , \theta } &\;\sum_{l=1}^2 \sum_{(i,j)\in \widehat{\cJ}\cap (\cC_l\times \cC_l)} | (E^\top_{\cC_l} X^{(l)} E_{\cC_l})_{ij} -\widehat{A}_{ij}|+ \lambda \theta \nonumber \\[8pt]
    \mbox{s.t.} &  \;  \diag{X^{(1)}} \leq \theta e, \hspace{1cm} \diag{X^{(2)}} \leq \theta e \nonumber  \\[5pt]
    & \; \norm{X^{(1)}_{1:10,11:20}}_\infty \leq \alpha, \quad  \;  \norm{X^{(2)}_{1:10,11:20}}_\infty \leq \alpha \nonumber \\[5pt]
    & \; X^{(1)} \in \S^{20}_+, \hspace{0.6cm} \quad X^{(2)} \in \S^{20}_+, \hspace{0.6cm} \theta \in \R.
\end{align}
To completely recover the optimal solution of \eqref{eq.26}, solving \eqref{eq.27} together with the consistency constraints $X^{(1)}_{[ij]} - X^{(2)}_{[ij]} = 0  \hspace{0.2cm} \text{for }  (i,j)\in (\cC_1\cap \cC_2)\times (\cC_1\cap \cC_2), \; i\leq j$ is once again necessary.

In converting \eqref{eq.26} to \eqref{eq.27}, we have replaced the single PSD constraint on $X \in \S^{30}_+$ with a pair of PSD constraints:
$X^{(1)}\in \S^{20}_+$ and $X^{(2)}\in \S^{20}_+$. Note that the expected complexity for a spectral decomposition on $X$ is of the order $(30)^3 = 27,000$ whereas for the two $\S^{20}_+$ variables, it is only $2 (20)^3 =16,000$: a theoretical reduction ratio of $16/27$. This ratio is in fact applicable to all problems with $d_1 = 2 d_2$ since they admit the same form of converted SDP as \eqref{eq.27}.

Based on our empirical observation, theoretical ratios such as the above are largely reliable in predicting the efficiency of the SDP conversion. In larger problems where $d_1= K d_2$ for some integer $K\geq 2$, the ratio can be further improved depending on the cliques configuration. In the following, we design a ratio minimizing decomposition strategy for general recovery problems.

A natural attempt to decompose an $\S^{d_1+d_2}$ variable with $d_1 = Kd_2$ is to uniformly partition its first $d_1 \times d_1$ block into $K \times K$ blocks of size $d_2\times d_2$. The clique variables are formed by pairing each of the latter with the bottom-right $d_2 \times d_2$ block. Explicitly, the cliques are given by
\begin{align*}
    \cC_l   &=\{(l-1)d_2+1,\ldots, l d_2\}\cup\{K d_2+1,\ldots, (K+1)d_2\}, \quad  \text{ and } \\[5pt]
    \cC_l \cap \cC_r &= \{ K d_2 +1,\ldots, (K+1)d_2 \}  \hspace{0.2cm} \text{for } \text{all distinct } l,r\in\{1,\ldots,K\}.
\end{align*}
Here, \eqref{eq.26} is converted to an SDP with $K$ clique variables each in $\S^{2d_2}_+$; this corresponds to an expected spectral decomposition complexity of order $K (2d_2)^3$ whereas the original corresponds to $(d_1+d_2)^3 = ((K+1)d_2)^3$, yielding a reduction ratio of $(8K)/(K+1)^3$. Hence, the benefit of conversion is enhanced as $K$ increases. Additionally, the memory required to store the variables -- which scales quadratically with size -- is expected to be reduced by a factor of $(4K)/(K+1)^2$.

Despite the promising reduction, the above strategy can still be improved by considering finer partitions to generate the clique variables. Following the above partition of the first $d_1 \times d_1$ block into $d_2\times d_2$ sub-blocks, we further partition each of the latter into blocks of size $(d_2/L) \times (d_2/L)$. The cliques are again obtained by pairing these fine blocks with the bottom-right $d_2 \times d_2$ block, resulting in $KL$ variables of size $(d_2/L+d_2)\times (d_2/L+d_2)$. The corresponding reduction ratio is $\frac{KL(1/L+1)^3}{(K+1)^3}$, whence it is not difficult to deduce that selecting $L=2$ yields the optimal ratio.

To conclude, {letting $K:=d_1/d_2$}, we adopt the following strategy for the class of matrix completion problems \eqref{eq.26}, assuming that {$d_2$ is even and $2K=2d_1/d_2$ is an integer}:
\begin{align*}
    \cC_l   &=\{(l-1)d_2/2 + 1,\ldots, l d_2/2\}\cup\{K d_2+1,\ldots, (K+1)d_2\}, \quad  \text{ and } \\[5pt]
    \cC_l \cap \cC_r &= \{ K d_2 +1,\ldots, (K+1)d_2 \}  \hspace{0.2cm} \text{for } \text{all distinct } l,r\in\{1,\ldots,2K\},
\end{align*}
and the corresponding converted SDP is as follows:
\begin{align}\label{eq.28}
    \min_{X^{(1)}, \ldots,  X^{(2K)},\; \theta } &\;\sum_{l=1}^{2K} \sum_{(i,j)\in {\widehat{\cJ}}\cap (\cC_l\times \cC_l)} |(E^\top_{\cC_l} X^{(l)} E_{\cC_l})_{ij}-\widehat{A}_{ij}|+ \lambda \theta  \\[8pt]
    \mbox{s.t.} &  \;  \diag{X^{(l)}} \leq \theta e  \hspace{0.2cm} \text{for }    l = 1, \ldots, 2K  \nonumber \\[5pt]
    & \; \norm{X^{(l)}_{1:d_2 /2,1+d_2/2:3d_2/2}}_\infty \leq \alpha   \hspace{0.2cm} \text{for }  l = 1, \ldots, 2K \nonumber \\[5pt]
    & \; ( X^{(1)}, \ldots,  X^{(2K)})  \in  \S^{3d_2/2}_+ \times \cdots \times \S^{3d_2/2}_+ \nonumber \\[5pt]
    & \; X^{(l)}_{[ij]} - X^{(1)}_{[ij]}  = 0,   \hspace{0.2cm} \text{for all }   (i,j)\in (\cC_l\cap\cC_1)\times(\cC_l\cap\cC_1) \text{ and } i\leq j,   \text{ for each }  l = 2, \ldots, 2K. \nonumber
\end{align}

\subsection{CMD: {clique decomposition based on PSD projection cost}} \label{subsec-3.2} We now consider SDPs with a generic sparsity pattern, where reordering of nodes (row and column permutations) is vital to minimize fill-ins during the chordal completion and to find good clique distributions. Once again, we remark that arrowhead-like \textit{quasi-block diagonal} structures such as those observed in Figure~\ref{fig:1a}(c) and (d) are typically observed after reordering for general SDPs with reasonably conducive sparsity pattern. Thus, clique construction generally also involves the grouping or decomposition of blocks akin to the procedure implemented in the previous section.

Complications typically arise due to the non-uniformity of the block sizes, where it is no longer possible to generate uniform cliques as in the matrix recovery problems. In this section, we propose a greedy heuristic to determine suitable clique assignments. {We use two criteria to assess each clique assignment, namely the estimated PSD projection cost} and the number of additional equality constraints (the linking constraints described in Section~\ref{subsec-2.2}), which is equal to $$m^+=  \sum_{(\cC_l,\cC_r)\in\cF}  |\cC_l \cap \cC_r|(|\cC_l \cap \cC_r|+1)/2.$$

The following result from \cite{NFF2003} shows how to merge {two cliques connected by an edge of the clique tree} while preserving the clique tree and the CIP in \eqref{eq.22}.

\begin{theorem}{(Nakata et al., 2003)}
    Consider a chordal graph $\cG(\cV,\cE)$ with a selection of maximal cliques $\cC_1, \ldots, \cC_p$ corresponding to a clique tree $\cT(\cK,\cF)$. Let $(\cC_l, \cC_r) \in \cF$, and suppose that $\cC_l$ and $\cC_r$ are merged into one clique $\cC_l \cup \cC_r$. This defines a graph $\cT'(\cK',\cF')$ with:
    \begin{align*}
        \cK' &=   \Big(\cK \setminus \{\cC_l, \cC_r\} \Big) \cup \{  \cC_l \cup \cC_r \} \hspace{0.2cm} \text{ and }\\[7pt]
        {\cF}' &= {\big\{(\cC,\cC')\in\cF:\cC,\cC'\notin\{\cC_l,\cC_r\}\big\}} \\[5pt]
        &\quad {\bigcup\big\{(\cC_l\cup\cC_r,\cC):\cC\in\cK\setminus\{\cC_l,\cC_r\},\;
        (\cC_l,\cC)\in\cF\ \mbox{or}\ (\cC_r,\cC)\in\cF\big\}}.
    \end{align*}
    Then $\cT'(\cK',\cF')$ is a clique tree that respects CIP.
\end{theorem}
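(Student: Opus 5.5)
The plan is to check two things separately: that $\cT'$ is a tree, and that it satisfies the CIP \eqref{eq.22} with respect to the new family $\cK'$. Maximality of $\cC_l\cup\cC_r$ as a clique is not needed, since the statement only concerns the tree structure and the CIP.

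\textbf{Tree structure.} This is the graph-theoretic fact that contracting an edge of a tree produces a tree. Write $\cC^\ast:=\cC_l\cup\cC_r$. The new graph has $p-1$ nodes. Its edges are the old edges not touching $\{\cC_l,\cC_r\}$, together with one edge $(\cC^\ast,\cC)$ for each old neighbour $\cC$ of $\cC_l$ or $\cC_r$.

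No $\cC$ can be adjacent to both $\cC_l$ and $\cC_r$, because then $\cC,\cC_l,\cC_r$ would form a triangle in the tree $\cT$. So no duplicate edges arise, and $|\cF'|=|\cF|-1=p-2$.

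For connectivity, take any path in $\cT$ and replace each occurrence of $\cC_l$ or $\cC_r$ by $\cC^\ast$, deleting the repeated node where the edge $(\cC_l,\cC_r)$ was used. This gives a walk in $\cT'$. A connected graph with $p-1$ nodes and $p-2$ edges is a tree.

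\textbf{CIP: setup.} Fix two nodes $\cA,\cB\in\cK'$ and a node $\cD$ on their unique path in $\cT'$. We must show $\cA\cap\cB\subseteq\cD$. The key observation is how paths correspond under contraction. The $\cT'$-path between $\cA$ and $\cB$ is obtained from a $\cT$-path by replacing the (consecutive) occurrences of $\cC_l$ and/or $\cC_r$ with the single node $\cC^\ast$.

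\textbf{Case 1: neither $\cA$ nor $\cB$ is $\cC^\ast$.} Then $\cA,\cB\in\cK$. Their $\cT$-path $P$ maps onto the $\cT'$-path as above. If $\cD\neq\cC^\ast$, then $\cD$ lies on $P$, and the old CIP gives $\cA\cap\cB\subseteq\cD$. If $\cD=\cC^\ast$, then $P$ passes through $\cC_l$ or $\cC_r$; say $\cC_l$. Then $\cA\cap\cB\subseteq\cC_l\subseteq\cC^\ast$.

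\textbf{Case 2: $\cA=\cC^\ast$ and $\cB=\cC\in\cK$.} This is the case I expect to be the main point. Orient the tree so that the old $\cT$-path from $\cC$ reaches exactly one of $\cC_l,\cC_r$ first; without loss of generality it is $\cC_r$, so the path runs $\cC\to\cdots\to\cC_r\to\cC_l$. We must show $(\cC_l\cup\cC_r)\cap\cC\subseteq\cD$ for every node $\cD$ strictly between $\cC$ and $\cC_r$. Split the intersection into two parts:
\begin{itemize}
\item $\cC_r\cap\cC\subseteq\cD$ holds directly by the old CIP, since $\cD$ lies on the $\cT$-path from $\cC_r$ to $\cC$.
\item $\cC_l\cap\cC$: the $\cT$-path from $\cC_l$ to $\cC$ passes through $\cC_r$ and then through $\cD$. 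The old CIP therefore gives $\cC_l\cap\cC\subseteq\cD$ (and also $\subseteq\cC_r$).
\end{itemize}
Taking the union gives the claim. The endpoints themselves are trivial, since $\cA\cap\cB$ is contained in both $\cA$ and $\cB$.

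This exhausts the cases by symmetry, so $\cT'(\cK',\cF')$ is a clique tree satisfying the CIP.
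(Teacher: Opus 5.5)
Your proof is correct. There is no in-paper proof to compare it with: the paper quotes this result from Nakata et al.\ (2003) and uses it as a black box.

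Every step of your argument checks out. The no-triangle observation gives $|\cF'|=p-2$, and the lifted walk gives connectivity. In the only substantive case, $\cC_l\cup\cC_r$ against some $\cC\in\cK$ on the $\cC_r$ side, you split $(\cC_l\cup\cC_r)\cap\cC$ into $\cC_r\cap\cC$ and $\cC_l\cap\cC$ and apply the old CIP along the path through $\cC_r$; that is exactly right.

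A real advantage of your route is that it uses only that $\cT$ is a tree with the CIP, never chordality or maximality. That is what CMD (Algorithm~\ref{algo-CMD}) needs, since it applies the theorem repeatedly and after the first merge the nodes are no longer maximal cliques of $\cG$.

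In that general setting, tidy one point. Your count of $p-1$ nodes presumes $\cC_l\cup\cC_r\notin\cK\setminus\{\cC_l,\cC_r\}$. This follows from the CIP. For $\cC$ on the $\cC_r$ side one has $\cC\cap\cC_l\subseteq\cC_r$, so $\cC=\cC_l\cup\cC_r$ would give $\cC_l\subseteq\cC_r$ and hence $\cC=\cC_r$, a contradiction.

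A shorter proof of the CIP part is also available. Use the equivalent formulation that, for each vertex $v$, the nodes containing $v$ induce a connected subgraph; the equivalence is immediate from uniqueness of tree paths. Contracting the edge $(\cC_l,\cC_r)$ maps each such set for $\cT$ onto the corresponding set for $\cT'$, and images of connected sets under edge contraction are connected. This replaces your case analysis with a one-line argument, while your version has the merit of working directly with the CIP as the paper states it.
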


In the above theorem, $\cT'(\cK',\cF')$ is a clique tree of size $p-1$ whose nodes and edges are those of $\cT(\cK,\cF)$ except that the nodes $\cC_l$ and $\cC_r$ are fused into a single node. Note that after any number of merges, the final clique tree can also be numbered to satisfy the RIP.

Algorithm~\ref{algo-CMD} presents the pseudocode for our clique-assignment strategy named CMD. This algorithm {starts from a clique tree and merges only pairs connected by an edge of the current tree}. {The parameter $\delta=g(\max\{|\cC_l|,|\cC_r|\})$ sets the threshold for the cost ratio in Condition 1. The parameters $\tau_2$ and $\tau_3$ set the thresholds for the overlap ratio and clique size in Conditions 3 and 4, respectively. The choice of $g$ and the default parameter values are given in Section~\ref{sec-experimental-setting}.} {Condition 1 aims to reduce the estimated PSD projection cost.} Algorithm~\ref{algo-CMD} also promotes merging when $(\cC_l,\cC_r)$ is a pair of small cliques or when they share a significant portion of common elements. The latter is partially inspired by SparseCoLO \cite{NFF2003}, where the main priority is to minimize the number of additional consistency constraints corresponding to the overlapping common elements of any two clique variables.

Note that the initial set of cliques can be directly deduced from the pattern observed after the (symbolic) Cholesky factorization -- for instance, we may group non-zero entries along the same row into a single clique. Of course, cliques obtained in this manner rarely correspond to a good distribution: they are typically highly unbalanced in clique sizes and require numerous merges.

\IncMargin{1em}
\begin{algorithm}[ht!]
\caption{CMD: {clique decomposition based on PSD projection cost}}
\label{algo-CMD}
\SetAlgoLined
\textbf{Initialize:} {A clique tree $\cT(\cK,\cF)$ with $\cK=\{\cC_1, \dots, \cC_p\}$}, and parameters $\tau_2 \in [0,1]$ and $\tau_3$. \\
{Set ${\tt merged}\leftarrow {\tt true}$.}\\

\While{{${\tt merged}={\tt true}$}}{
	{Set ${\tt merged}\leftarrow {\tt false}$.}\;
		\ForEach{{$(\cC_l,\cC_r)\in\cF$}}  {
			\BlankLine
			Compute  $\delta \; \leftarrow \; {g(\max\{|\cC_l|,|\cC_r|\})}$ \;
			\BlankLine
			$R_{lr} \; \leftarrow \; \frac{|\cC_l  \cup \cC_r|^3}{|\cC_l|^3 + |\cC_r|^3}$\;
			\BlankLine
			Condition 1: $ 1+ \delta > R_{lr}$
			\BlankLine
			Condition 2: $ |\cC_l \cap \cC_r| = \min \{ |\cC_l | , |\cC_r| \}$
			\BlankLine
			Condition 3: $ |\cC_l \cap \cC_r| > \tau_2 * \max \{ |\cC_l | , |\cC_r| \}$
			\BlankLine
			Condition 4: $ \max \{ |\cC_l | , |\cC_r| \} <  \tau_3$
			\BlankLine
			\If{ Any Condition $i$ is satisfied, $i = 1, \ldots, 4$ } {
			\BlankLine
					Merge $\cC_l$ with $\cC_r$\;
					{Update $\cT(\cK,\cF)$ as in Theorem 2 and set $p\leftarrow|\cK|$}\;
					{Set ${\tt merged}\leftarrow {\tt true}$ and restart the scan of $\cF$}\;
					\Break\;
				}
			\BlankLine
	}
}
\end{algorithm}
\DecMargin{1em}

The {clique distribution} obtained from CMD fundamentally differs from those obtained from SparseCoLO since we {use the estimated PSD projection cost ratio $\sum_{l=1}^p |\cC_l|^3/n^3$ as the main criterion} instead of {the number of overlapping entries}. Our rationale shall be made clear in Section~\ref{sec-computational}, where our efficient approach in imposing the consistency constraints allows us to handle a much higher number of overlaps.

The efficacy of the various decomposition schemes proposed in Section \ref{sec-clique-decomp} is highly contingent on the original sparsity pattern of the problem. For problems with a relatively dense aggregate pattern, Algorithm~\ref{algo-CMD} typically outputs a trivial set containing a single clique that represents the entire variable. Note that for some SDP problems, it is possible to perform a pre-processing step to obtain a reformulated problem that has a conducive aggregate sparsity pattern. In Appendix~\ref{sec-theta}, we present a technique for sparsifying a class of seemingly dense SDP with a sparse-plus-low-rank cost matrix. For more complex problems where the constraint matrices are also sparse-plus-low-rank, we refer to \cite{Tang2026-SPLR} for more sophisticated sparsification techniques.

\section{Computational details on multi-clique structures} \label{sec-computational} In the practical implementation, our naive approach in Section~\ref{subsec-clique-conversion} \eqref{eq.20} requires modifications on two fronts: (i) the matrix coefficients on the objective and constraint functions need to be adjusted to avoid overcounting in the overlapped indices and (ii) the set of linear constraints \eqref{eq.25} must be imposed either directly, or using a cheaper approach. We remark that all computational details presented in this section are applicable to any form of decomposition, even when $\{\cC_1, \ldots, \cC_p\}$ is not a family of maximal cliques from a chordal graph.

The following quantities are useful in facilitating our subsequent discussions:
\begin{align}
    \widehat{E} & =  \cup_{l=1}^p \cC_l\times \cC_l  \label{eq.34} \\[5pt]
    \cO_{ij}      & = \{l : (i,j)\in  \cC_l\times \cC_l,\;\; l=1,\ldots,p \} \hspace{0.2cm} \text{for } (i,j) \in \widehat{E} \label{eq.35}\\[5pt]
    l_{ij}       & =\min \cO_{ij}=\min \{ l : (i,j)\in \cC_l\times \cC_l, \; \; l=1,\ldots,p\} \hspace{0.2cm} \text{for } (i,j)\in \widehat{E}. \label{eq.36}
\end{align}
Note that $\cO_{ij}$ is the set of cliques that contain the edge $(i,j)$ and each $|\cO_{ij}| \geq 1$; we say $(i,j)$ is an \textit{overlapped entry} if the latter is strictly greater than 1. Furthermore, $l_{ij}$ is the lowest clique index corresponding to $(i,j)$ and $\cC_{l_{ij}}$ can be considered its \textit{principal} clique.

\subsection{Implementing the decomposition and compensating the overlaps} \label{subsec-overlap-compensation} In \eqref{eq.20}, quantities such as $\sum_{l=1}^p\langle C^{(l)},X^{(l)} \rangle$ generally include multiple counts of identical terms. Suppose $|\cO_{ij}|>1$ for some entry $(i,j)$, then $|\cO_{ij}|$ terms identical in value as $\inprod{C_{ij}}{X_{ij}}$ are overcounted towards the sum total. To mitigate this issue, the following pre-processing steps are necessary prior to running any SDP solvers:
\begin{align}
    C_{ij} &\rightarrow \frac{C_{ij}}{|\cO_{ij}|} \quad \forall \; (i,j) \in \cC_l\times \cC_l, \; l=1,\ldots,p \hspace{0.2cm} \text{and similarly, }\label{eq.37}
    \\[7.5pt]
    (A_{k})_{ij} &\rightarrow \frac{(A_k)_{ij}}{|\cO_{ij}|} \quad \forall \; (i,j) \in \cC_l\times \cC_l, \; l=1,\ldots,p \hspace{0.2cm} \text{for all }k = 1, \ldots m. \label{eq.38}
\end{align}

In our practical implementation, the sets $\cO_{ij}$ are encoded as a matrix that can be constructed efficiently by first defining the \textit{decomposition matrix} $\cD^\top$ that satisfies the following identity:
\begin{align}\label{eq.39}
    \cD^\top \; \text{svec}(X) &= \text{svec}_F(X),
\end{align}
where $\text{svec}_F(X) :=\big[\text{svec}\left(X^{(1)}\right)\,;\, \ldots\,;\, \text{svec}\left(X^{(p)}\right)  \big]\in\R^{N_F}$ has dimension
\begin{align}\label{eq.40}
    N_F = \sum_{l=1}^p |\cC_l|(|\cC_l|+1)/2.
\end{align}
In the above, $\text{svec}(Y) = [Y_{11}; \sqrt{2}Y_{12}; Y_{22}; \ldots; \sqrt{2}Y_{1n};\sqrt{2}Y_{n-1,n}; Y_{nn}]$ for any $n\times n$ symmetric matrix $Y$.
Note that $\cD^\top\in \R^{N_F \times N}$ is a $0$-$1$ matrix with $N = |\text{svec}(X)| = n(n+1)/2$. On the other hand, $\cD \in \R^{N \times N_F}$ nearly gives the inverse operation of \eqref{eq.39} if not for the overlapping entries. Note that for an entry $(i,j)$, $\cD$ collects multiple terms identical to $X_{ij}$ from all $\cC_l$ that cover $(i,j)$. In other words, $\text{svec}(Z) \leftarrow \cD \; \text{svec}_F(X)$ is an operation that puts $\sum_{l \in \mathcal{O}_{ij}} X^{(l)}_{ij}$ on entry $(i,j)$ of $Z$.

From these properties, it is not difficult to deduce that $H:= (\cD \cD^\top)^\dagger \in \S^N$ is a diagonal matrix whose diagonal entries are the reciprocals of the overlap counts on $\widehat E$, namely, we have
\begin{align}\label{eq.41}
    [\operatorname{diag}(H)]_{(ij)}=
    \begin{cases}
        1/|\cO_{ij}|, & (i,j)\in\widehat E,\\
        0, & (i,j)\notin\widehat E,
    \end{cases}
    \quad 1\leq i\leq j\leq n.
\end{align}
Here, the notation $(ij)$ is used to denote the index of the entry $X_{ij}$ along the vector $\text{svec}(X)\in\R^N$. Note that the diagonal entries of $H$ are the overlap-compensation factors in \eqref{eq.37}. With $H$, modifications such as \eqref{eq.37} and \eqref{eq.38} can be cheaply executed even during the solving process. For example, we may transform
\begin{align}
    \text{svec}(C) & \rightarrow H \;\text{svec}(C).
    \label{eq.42}
\end{align}
When the computation aims to recover clique quantities, e.g., $X^{(l)}$, we simply extract entries corresponding to $\cC_l$ from $\text{svec}(X)$. That is, construct $\cD^\top_l \in \{0,1\}^{N_l \times N}$ so that $\cD_l^\top \; \text{svec}(X) = \text{svec}(X^{(l)})$ with $N_l = |\cC_l|(|\cC_l|+1)/2$. We can analogously define $\cM_l \in \R^{N_l\times N_F}$ as
\begin{align}
    \cM_l &= \cD^\top_l H \cD, \;\text{ so that }\cM_l \; \text{svec}_F(X) \text{ returns the averaged entries associated with clique }\cC_l,
    \label{eq.44}
    \\[5pt]
    \cM &= [\cM_1\,;\, \ldots\,;\, \cM_p].
\end{align}
With a slight abuse of notation, $\cM_l(V)$ denotes the clique matrix satisfying $\text{svec}(\cM_l(V))=\cM_l\,\text{svec}_F(V)$.

Finally, we note that all the above compensating (sparse) matrices can be cached during the pre-processing step such that any overlap compensating operations during the solution process amounts only to some sparse matrix-vector multiplications.

\subsection{Consistency constraints via linear-space projection} \label{sec-linproj} We are now in the position to introduce our strategy in handling the consistency constraints \eqref{eq.25}. First, we describe the feasible set corresponding to the above family of linear constraints as a simple linear space:
\begin{align}\label{eq.45}
    \L = \Big\{  (X^{(1)},\dots,X^{(p)}): X^{(l)}_{[ij]}=X^{(r)}_{[ij]} \hspace{0.2cm} \text{for }  (i,j)\in \cC_{lr}\times \cC_{lr}, \; i\leq j,\; (\cC_l,\cC_r)\in \cF
    \Big\}.
\end{align}
Recall that we use $[ij]$ to denote the index pair corresponding to $(i,j)$ in the sub-blocks $X^{(l)}$, $X^{(r)}$. In the McADMM algorithm described in Section~\ref{sec-ADMM}, imposing set constraints involves projections onto the corresponding sets. In the case of our multi-block SDP \eqref{eq.20}, together with the consistency constraints, these sets are $\S^{\mathcal{C}_1}_+ \times \cdots \times \S^{\mathcal{C}_p}_+$, $\N^{\mathcal{C}_1} \times \cdots \times \N^{\mathcal{C}_p}$, and $\L$.

In this section, we discuss an efficient approach concerning the projection onto the intersection of the last two sets; projections onto the PSD cones are more straightforward and shall be outlined in Section~\ref{sec-ADMM}. The following proposition claims that the projection of a matrix $X$ onto $\L\cap (\N^{\mathcal{C}_1} \times \cdots \times \N^{\mathcal{C}_p})$ can be done by a simple averaging step, followed by imposing the bound constraints in $\N^{\mathcal{C}_1} \times \cdots \times \N^{\mathcal{C}_p}$.

\begin{proposition} \label{prop-intersection}
    Given $V\in \S^{\cC_1}\times \cdots \times \S^{\cC_p}$, let $\widehat{V} := \Pi_{\L\cap (\N^{\cC_1}\times\cdots\times \N^{\cC_p})}(V)$. We have
    \begin{align}\label{eq.46}
        \widehat{V}^{(l)}_{[ij]} = \min\left\{U_{ij},\,\max\left\{ \frac{\sum_{t\in \cO_{ij}} V^{(t)}_{[ij]}}{|\cO_{ij}|}, L_{ij}\right\}\right\}
        \hspace{0.2cm} \text{for all } (i,j)\in \cC_l\times \cC_l, \; l=1,\ldots,p.
    \end{align}
    The computation of $\widehat{V}^{(l)}$ is implemented as
    \begin{align}\label{eq.49}
        \text{svec}(\widehat{V}^{(l)}) =\min \left\{  \text{svec}(U^{(l)}) , \max\left\{ \cM_l \; \text{svec}_F(V) ,  \text{svec}(L^{(l)})  \right\} \right\},\quad l=1,\ldots,p.
    \end{align}
\end{proposition}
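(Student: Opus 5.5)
The plan is to reduce the projection onto $\L\cap(\N^{\cC_1}\times\cdots\times\N^{\cC_p})$ to a family of independent one-dimensional problems, one for each entry $(i,j)\in\widehat E$ with $i\le j$.

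\emph{Step 1: parametrize $\L$.} By the CIP (as remarked after \eqref{eq.25}), the pairwise equalities along tree edges force every clique variable containing $(i,j)$ to carry the same value. Conversely, a common value trivially satisfies \eqref{eq.25}. Concretely, for $l,r\in\cO_{ij}$ we have $i,j\in\cC_l\cap\cC_r$. The CIP then puts $i,j$ in every clique on the tree path from $\cC_l$ to $\cC_r$, and chaining the edge constraints along that path gives $X^{(l)}_{[ij]}=X^{(r)}_{[ij]}$. Hence $\L$ consists exactly of those tuples $(X^{(l)})_l$ for which there is a scalar $x_{ij}$ with $X^{(l)}_{[ij]}=x_{ij}$ for all $l\in\cO_{ij}$.

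\emph{Step 2: separate the objective.} The squared distance to $V$ is
\[
\sum_l\norm{X^{(l)}-V^{(l)}}_F^2=\sum_{(i,j)\in\widehat E,\,i\le j} w_{ij}\sum_{t\in\cO_{ij}}\bigl(x_{ij}-V^{(t)}_{[ij]}\bigr)^2,
\]
with $w_{ij}=1$ if $i=j$ and $w_{ij}=2$ otherwise. The bound constraint $L_{ij}\le X^{(l)}_{[ij]}\le U_{ij}$ becomes simply $x_{ij}\in[L_{ij},U_{ij}]$, since $L$ and $U$ are global matrices restricted to each clique by \eqref{eq.19}. The problem therefore splits into independent scalar problems of the form $\min_{x\in[L_{ij},U_{ij}]}\sum_{t\in\cO_{ij}}(x-V^{(t)}_{[ij]})^2$.

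\emph{Step 3: solve each scalar problem.} Each objective is a convex quadratic in $x$, equal to $|\cO_{ij}|(x-\bar v_{ij})^2$ plus a constant, where $\bar v_{ij}$ is the average of $V^{(t)}_{[ij]}$ over $t\in\cO_{ij}$. Its minimizer over the interval is therefore the clip of $\bar v_{ij}$ to $[L_{ij},U_{ij}]$, which is exactly \eqref{eq.46}. This requires $L_{ij}\le U_{ij}$, which we assume so that the set is nonempty.

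\emph{Step 4: the implementation formula \eqref{eq.49}.} It remains to check that $\cM_l\,\mathrm{svec}_F(V)=\cD_l^\top H\cD\,\mathrm{svec}_F(V)$ produces the averages. The vector $\cD\,\mathrm{svec}_F(V)$ places the sum $\sum_{t\in\cO_{ij}}\sqrt{w_{ij}/2}\,\ldots$ in the svec entry $(ij)$; the $\sqrt2$ factors are identical on both sides, so they cancel. Multiplying by $H$ divides this by $|\cO_{ij}|$ by \eqref{eq.41}, and $\cD_l^\top$ then extracts the entries in $\cC_l$. Finally, clipping commutes with svec up to the $\sqrt2$ scaling, and the same scaling is applied to $\mathrm{svec}(L^{(l)})$ and $\mathrm{svec}(U^{(l)})$.

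The main obstacle is Step 1: showing that the edge-only constraints in \eqref{eq.45} already enforce equality across all of $\cO_{ij}$. This is where the CIP is essential; without it, the averaging in \eqref{eq.46} would run over the wrong index set.
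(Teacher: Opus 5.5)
Your proof is correct and takes the same approach as the paper: use $\L$ to force a single common value on each entry $(i,j)\in\widehat{E}$, split $\norm{X-V}^2$ into independent scalar problems over $[L_{ij},U_{ij}]$, and clip the average over $\cO_{ij}$. You go further than the paper in two places: you justify via the CIP that the tree-edge constraints force equality across all of $\cO_{ij}$, and you check the svec formula \eqref{eq.49}. In that check the svec factor should read $\sqrt{w_{ij}}$ rather than $\sqrt{w_{ij}/2}$, a harmless slip since it cancels.
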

\begin{proof}
    Note that $ \widehat{V}$ is given by the solution to the following minimization problem:
    \begin{align}\label{eq.47}
        \min_{X\in \L\cap (\N^{\cC_1}\times \cdots \times\N^{\cC_p})} \norm{X-V}^2.
    \end{align}
    For any $X\in \L$, we have that
    \begin{align*}
        X^{(l)}_{[ij]} = X^{(r)}_{[ij]} \hspace{0.2cm} \text{for all } r \in \cO_{ij} \text { and }(i,j)\in \cC_l\times \cC_l.
    \end{align*}
    From the definitions of $\widehat{E}$, $\cO_{ij}$ and $l_{ij}$ in \eqref{eq.34}-\eqref{eq.36}, we have
    \begin{align*}
        \norm{X-V}^2 =   \sum_{l=1}^p \sum_{(i,j)\in \cC_l\times \cC_l}  \left(X^{(l)}_{[ij]}-V^{(l)}_{[ij]}\right)^2 = \sum_{(i,j)\in\widehat{E}}\, \sum_{r\in \cO_{ij}} \left(X^{(l_{ij})}_{[ij]}-V^{(r)}_{[ij]}\right)^2.
    \end{align*}
    Thus, \eqref{eq.47} can be rewritten as
    \begin{align*}
        \min_{X\in \L\cap (\N^{\cC_1}\times \cdots \times\N^{\cC_p})} \Big\{  \sum_{(i,j)\in\widehat{E}}
        \sum_{r\in \cO_{ij}} \left(X^{(l_{ij})}_{[ij]}-V^{(r)}_{[ij]}\right)^2 :
        L_{ij} \leq X^{(l_{ij})}_{[ij]} \leq U_{ij}, \;\forall\; (i,j)\in\widehat{E}
        \Big\}.
    \end{align*}
    Solving the optimality conditions for this bounded convex problem yields the following:
    \begin{align*}
        \widehat{V}^{(l_{ij})}_{[ij]} = \min\left\{U_{ij},\, \max\left\{\frac{\sum_{r\in \cO_{ij}} V^{(r)}_{[ij]}}{|\cO_{ij}|},\; L_{ij}\right\}\right\}
        \hspace{0.2cm} \text{for } (i,j)\in\widehat{E}.
    \end{align*}
    Note that once again, $l_{ij}$ bears no importance other than being the representative clique index for the entry $(i,j)$. Thus, the above implies that \eqref{eq.46} holds for all cliques $\cC_l$.
\end{proof}

With the above proposition, define $\cB:=\L\cap(\N^{\mathcal{C}_1}\times\cdots\times\N^{\mathcal{C}_p})$. The proximal mapping of its support function $\delta_\cB^*$ can be evaluated via the Moreau decomposition:
\begin{align}\label{eq.50}
    \operatorname{Prox}_{\gamma\delta_{\cB}^*}(V)=V-\gamma\Pi_{\cB}(\gamma^{-1}V),\qquad \gamma>0.
\end{align}
We shall refer to the above framework as the \textit{linear-space projection} approach. Under this setup, a general multi-block SDP with bound constraints can be written as follows:
\begin{eqnarray}\label{eq.51}
    \begin{array}{rl}
        p^*&=  \min_{X^{(1)}, \ldots, X^{(p)}} \hspace{0.1cm}  \sum_{l=1}^p  \langle C^{(l)},X^{(l)} \rangle \\[5pt]
        \text{s.t.} & b_i =   \sum_{l=1}^p  \langle A^{(l)}_i, X^{(l)}\rangle \hspace{0.2cm} \text{for }  i=1,\ldots,m   \\[5pt]
        & ( X^{(1)}, \ldots,  X^{(p)})  \in  \S^{\mathcal{C}_1}_+ \times \cdots \times \S^{\mathcal{C}_p}_+ \\[5pt]
        & ( X^{(1)}, \ldots,  X^{(p)}) \in \L \cap (\N^{\mathcal{C}_1} \times \cdots \times \N^{\mathcal{C}_p}).
    \end{array}
\end{eqnarray}

\section{McADMM: a multi-clique symmetric Gauss-Seidel proximal ADMM} \label{sec-ADMM} Finally, we are in position to outline the adaptation of \SGSadmm\!\! framework (see \cite{LST2015,STY2015}, and more recently \cite{CST2015}) into the multi-clique setting. We remark that these multi-block \textit{convergent} variants of the ADMM are suitable in our case since the SDPs \eqref{eq.51} and \eqref{eq.3} each involve three blocks of variables, whence a directly extended ADMM is not guaranteed to converge. Furthermore, we adapt the inexact framework of \cite{CST2015} in McADMM where the subproblems in each iteration are allowed to be solved inexactly. While this is clearly advantageous in terms of computational cost, the flexibility of our algorithm will also be improved, for example, this flexibility will allow us to tackle SDPs with large-scale systems of linear equations that can only be solved efficiently using iterative solvers.

\subsection{McADMM for general SDPs of the form \texorpdfstring{\eqref{eq.3}}{(3)}} First, we consider a general multi-block \SGSadmm template applicable to SDPs of the form \eqref{eq.3}. Here, we shall not implement the \textit{linear-space projection} strategy as the second set constraints (which is encoded in the function $\mathscr{G}(X)$ in \eqref{eq.4}) on the SDP can be much more general than $(\N^{\mathcal{C}_1} \times \cdots \times \N^{\mathcal{C}_p})$. As discussed earlier, consistency constraints are represented in terms of linear constraints $\cL(X) = h$ so that each clique variable is updated independently except on the update pertaining to solving the linear system associated with this set of linear constraints.

Given the penalty parameter $\sig >0$, consider the augmented Lagrangian function associated with $\eqref{eq.6}$ defined by
\begin{eqnarray}
    L_\sig(y,\xi,S,Z; X) = \left\{
    \begin{array}{l}
        \inprod{-b}{y} + \inprod{-h}{\xi} +\mathscr{F}^*(-S) + \mathscr{G}^*(-Z)
        \\[5pt]
        +\frac{\sig}{2}\norm{\mathscr{A}^* y + \cL^* \xi + S + Z -\mathscr{C} + \sig^{-1} X}^2 -\frac{1}{2\sig}\norm{X}^2.
    \end{array}\right.
    \label{eq.52}
\end{eqnarray}

In deriving the convergence of the algorithm (see \cite{CST2015}), one requires the following assumption:

\begin{assumption}
    The linear maps $\mathscr{A}$ and $\cL$ are surjective.
\end{assumption}

\medskip

Algorithm~\ref{alg:mcadmm-general} presents McADMM for the general problem \eqref{eq.3}. In this algorithm, as well as in Algorithm~\ref{alg:mcadmm-bounded} presented in the next section, quantities such as $X^{(l),k} \in \S^{\cC_l}$ represent the $\cC_l$ sub-matrix of the variable $X$ at iteration $k$. In the case where the SDP \eqref{eq.6} is obtained from a clique decomposition of a single-variable SDP, the parameters $\mathscr{A}$ and $\mathscr{C}$ are obtained by decomposing (with overlap-compensation) the original parameters $A$ and $C$, in the manner described in Section~\ref{subsec-overlap-compensation}.

\IncMargin{1em}
\begin{algorithm}[tp]
\small
\caption{General McADMM for \eqref{eq.3}}
\label{alg:mcadmm-general}
\SetAlgoLined
\textbf{Input:} $\sigma>0$ and $\tau \in [1,(1+\sqrt{5})/2)$, and a summable sequence of tolerances $\{\varepsilon_k \geq 0 \}$. \\
\textbf{Initialize data:} $(y^0,\xi^0,S^0,Z^0,X^0)$ such that $\xi^0=0$, $\cA(X^0)=b$, and $y^0=(\cA\cA^*)^{-1}\cA(C-S^0-Z^0)$.\\
Find an appropriate clique assignment
$\{\mathcal{C}_1,\ldots \mathcal{C}_p\}$, e.g., using CMD (Algorithm~\ref{algo-CMD}). \\
\BlankLine
Decompose the parameters $C$ and $\cA$ into $\mathscr{C}$ and $\mathscr{A}$, respectively. Keep $y^0$ as a global variable and decompose only the initial matrix variables $(S^0,Z^0,X^0)$ into $(S^{(l),0},Z^{(l),0},X^{(l),0})$ for $l = 1, \ldots, p$.\\
\BlankLine
Perform the following steps at each iteration $k$ until convergence
\BlankLine
\textbf{Step 1a. } Let ${R}_\xi^k =  \mathscr{A}^* y^k +\cL^* \xi^k +  S^k + Z^k - \mathscr{C} + \sig^{-1} X^k.$ Compute
\begin{eqnarray*}
    \widehat{\xi}^{k+1} \approx \mbox{argmin}_{\xi} \;
    L_\sig (y^k,\xi,S^k,Z^k;X^k)
    = \mbox{argmin}_{\xi} \Big\{  \inprod{-h}{\xi} + \frac{\sig}{2}
    \norm{\cL^* (\xi-\xi^k)  + {R}_\xi^k}^2
    \Big\}
\end{eqnarray*}
such that the residual norm $\norm{(h - \sig\cL R^k_\xi) - \sig\cL\cL^* (\widehat{\xi}^{k+1}-\xi^k)} \leq \varepsilon_k.$
\BlankLine
\textbf{Step 1b. } For each clique $l = 1, \ldots p$, compute
\begin{eqnarray*}
    &&\hspace{-7mm}  Z^{(l),k+1} = \mbox{argmin}_{Z^{(l)}} \;
    L_\sig (y^k,\widehat{\xi}^{k+1},S^{(l),k},Z^{(l)};X^{(l),k})
    \\[5pt]
    &=& \mbox{argmin}_{Z^{(l)}} \Big\{ (g^{(l)})^*(-Z^{(l)}) +
    \frac{\sig}{2} \norm{Z^{(l)}-Z^{(l),k} + {R}_Z^{(l),k}}^2 \Big\}
    \;=\; -{\rm Prox}_{\sig^{-1}(g^{(l)})^*}(R_Z^{(l),k}-Z^{(l),k}),
\end{eqnarray*}
where ${R}_Z^{(l),k} =   \left(\cA^{(l)}\right)^*y^k +\left(\cL^{(l)}\right)^*\widehat{\xi}^{k+1} + S^{(l),k} + Z^{(l),k} - C^{(l)} + \sig^{-1} X^{(l),k}$.
\BlankLine
\textbf{Step 1c. }  Let ${R}_\xi^{k+1} = \mathscr{A}^* y^k +\cL^*\xi^k+ S^k + Z^{k+1} - \mathscr{C} + \sig^{-1} X^k$. Compute
\begin{eqnarray*}
    {\xi}^{k+1} \approx \mbox{argmin}_{\xi} \;
    L_\sig (y^k,\xi,S^k,Z^{k+1};X^k)
    = \mbox{argmin}_{\xi} \Big\{  \inprod{-h}{\xi} + \frac{\sig}{2}
    \norm{\cL^* (\xi-\xi^k)  + {R}_\xi^{k+1}}^2
    \Big\}
\end{eqnarray*}
such that the residual norm
$\norm{( h - \sig\cL R^{k+1}_\xi) - \sig \cL\cL^* (\xi^{k+1}-\xi^k)}\leq 10 \varepsilon_k$.
\BlankLine
\textbf{Step 2a. }  Let ${R}_y^k =\mathscr{A}^*  y^k+ \cL^* \xi^{k+1} + S^k + Z^{k+1} - \mathscr{C} + \sig^{-1} X^k$.
Compute \begin{eqnarray*}
    \widehat{y}^{k+1} \approx \mbox{argmin}_y \;
    L_\sig (y,\xi^{k+1},S^k,Z^{k+1};X^k)
    = \mbox{argmin}_y \Big\{  \inprod{-b}{y} + \frac{\sig}{2}
    \norm{\mathscr{A}^* (y-y^k)  + {R}_y^k}^2
    \Big\}
\end{eqnarray*}
such that the residual norm $\norm{( b - \sig\mathscr{A} {R}_y^k) -\sig \mathscr{A}\mathscr{A}^* (\widehat{y}^{k+1}-y^k)} \leq \varepsilon_k.$
\BlankLine
\textbf{Step 2b. }For each clique $l = 1, \ldots p$, compute \begin{eqnarray*}
    && \hspace{-7mm} S^{(l),k+1} = \mbox{argmin}_{S^{(l)}} \;
    L_\sig (\widehat{y}^{k+1},\xi^{k+1},S^{(l)},Z^{(l),k+1} ;X^{(l),k})
    \\[5pt]
    &=& \mbox{argmin}_{S^{(l)}} \Big\{   (f^{(l)})^*(-S^{(l)}) + \frac{\sig}{2}\norm{S^{(l)}-S^{(l),k} + R_S^{(l),k}  }^2
    \Big\}
    \;=\; -{\rm Prox}_{\sig^{-1}(f^{(l)})^*}(R_S^{(l),k}-S^{(l),k}),
\end{eqnarray*}
where $R_S^{(l),k} = \left(\cA^{(l)}\right)^* \widehat{y}^{k+1} + \left(\cL^{(l)}\right)^* \xi^{k+1} + S^{(l),k}  + Z^{(l),k+1} - C^{(l)}+ \sig^{-1} X^{(l),k}$.
\BlankLine
\textbf{Step 2c. }Let ${R}_y^{k+1} = \mathscr{A}^* y^k +  \cL^* \xi^{k+1}+ S^{k+1} + Z^{k+1} - \mathscr{C} + \sig^{-1} X^k$. Compute \begin{eqnarray*}
    {y}^{k+1} \approx \mbox{argmin}_y \;
    L_\sig (y,\xi^{k+1} ,S^{k+1},Z^{k+1} ;X^k)
    = \mbox{argmin}_y \Big\{  \inprod{-b}{y} + \frac{\sig}{2}
    \norm{\mathscr{A}^* (y-y^k)  +  {R}_y^{k+1}}^2
    \Big\}
\end{eqnarray*}
such that the residual norm $\norm{( b - \sig\mathscr{A} R^{k+1}_y) - \sig\mathscr{A}\mathscr{A}^* ({y}^{k+1}-y^k)} \leq 10 \varepsilon_k.$
\BlankLine
\textbf{Step 3. } For each clique $l = 1, \ldots p$, compute \begin{eqnarray*}
    X^{(l),k+1}= X^{(l),k} + \tau \sig \left(\left(\cA^{(l)}\right)^* y^{k+1} +\left(\cL^{(l)}\right)^* \xi^{k+1}+ S^{(l),k+1}+ Z^{(l),k+1}- C^{(l)}  \right).
\end{eqnarray*}
Upon termination, recover original matrices.
\vspace{-2pt}
\end{algorithm}
\DecMargin{1em}

We shall discuss the computational details of Steps \textbf{1b} and \textbf{2b} in the next subsection where a concrete setup is considered. Here, we first outline some practical aspects pertaining to the updates of $\xi^{k+1}$ and $y^{k+1}$ in Steps \textbf{1c} and \textbf{2c}, respectively. These updates correspond to solving some linear systems, where variable decomposition has little effect in terms of computational complexity.

From the optimality condition for the subproblem in Step \textbf{2a}, it is clear that $d_y:=\widehat{y}^{k+1}-y^k$ is the solution to the following system of linear equations:
\begin{eqnarray}
    \sig^{-1} b - \mathscr{A} R^k_y = G\, d_y = \Big(\mbox{$\sum_{l=1}^p$} \cA^{(l)} (\cA^{(l)})^*\Big) d_y.
    \label{eq.53}
\end{eqnarray}
We can easily solve \eqref{eq.53} if the sparse Cholesky factorization of $\mathscr{A}\mathscr{A}^*$ can be computed at a moderate cost and, more importantly, only once during initialization. For the purpose of factorization, we have
\begin{align}
    \mathscr{A}^* 		&= {\begin{bmatrix} \left(\cA^{(1)}\right)^*  \\ \vdots \\ \left(\cA^{(p)}\right)^*\end{bmatrix}} \in \R^{ N_F \times m}, \quad \text{so that }\quad
    {G:=\mathscr{A} \mathscr{A}^*}	= \sum_{l=1}^p \left(\cA^{(l)}\right) \left(\cA^{(l)}\right)^* \in \S^{m}. \label{eq.55}
\end{align}
The dominant operation at this step is therefore given by forward-backward triangular solves based on a precomputed sparse Cholesky factorization of $G$:
\begin{align}\label{eq.56}
    y^{k+1} \leftarrow G^{-1}\left[\sum_{l=1}^p \cA^{(l)} (C^{(l)} - Z^{(l),k+1} - S^{(l),k+1})
    {+\sig^{-1}\bigl(b-\mathscr{A}(X^k)\bigr)}\right],
\end{align}
which retains an $m\times m$ normal-equation system.

Alternatively, \eqref{eq.53} can also be solved using an iterative solver such as the preconditioned conjugate gradient (PCG) method. If a parallel computing platform is available, the matrix-vector operation, $\sum_{l=1}^p \cA^{(l)} (\cA^{(l)})^* d_y$, needed in the PCG method can be done in parallel for any given  $d_y$.

Our approach for $d_\xi:=\widehat{\xi}^{k+1}-\xi^k$ in Step \textbf{1a} is practically identical to the above; we solve the following system of linear equations:
\begin{eqnarray}
    \sig^{-1}h - \cL R^k_\xi =  \cL\cL^* d_\xi.
    \label{eq.57}
\end{eqnarray}
Again, depending on the sparsity structure of $\cL\cL^*$, one may either solve the linear system of equations \eqref{eq.57} via the sparse Cholesky factorization or the PCG method. For the coupling constraints arising from our clique decomposition algorithms (Section~\ref{sec-clique-decomp}), the coefficient matrix $\cL\cL^*$ can often be highly sparse and its Cholesky factorization can be computed without much difficulty. Of course, such a property is contingent on a favorable clique tree structure; see for example, \cite{ZL2021} for a more comprehensive discussion on the sparsity pattern of $\cL\cL^*$.

We reiterate that the subproblems in Steps \textbf{1c} and \textbf{2c} can be solved inexactly, so that for example, we can try to use $\widehat{y}^{k+1}$ to estimate $y^{k+1}$. In this case, the corresponding residual norm (at Step \textbf{2c}) is given by
\begin{eqnarray*}
    \hspace{-0.7cm}
    \norm{( b - \sig\mathscr{A} R^{k+1}_y) - \sig\mathscr{A}\mathscr{A}^* (\widehat{y}^{k+1}-y^k)}
    &\leq& \norm{( b - \sig\mathscr{A} R^{k}_y) - \sig\mathscr{A}\mathscr{A}^* (\widehat{y}^{k+1}-y^k)}
    + \norm{ \sig\mathscr{A} (R^{k+1}_y-R^k_y)}
    \\[5pt]
    &\leq&  \varepsilon_k\, +  \norm{ \sig\mathscr{A} (S^{k+1}-S^k)} .
\end{eqnarray*}
Therefore, we can accept $\widehat{y}^{k+1}$ as an approximate solution in Step \textbf{2c} if $\norm{ \sig\mathscr{A} (S^{k+1}-S^k)} \leq 9\varepsilon_k$. Similarly, we can approximate $\xi^{k+1}$ by $\widehat{\xi}^{k+1}$ if $\norm{ \sig\cL (Z^{k+1}-Z^k)} \leq 9\varepsilon_k$. Due to this inexact setup, we can typically skip the subproblems in Steps \textbf{1c} and \textbf{2c} in practice when the iterates are close to the solution. As a result, the computational savings in each iteration can be significant, especially if the PCG method is used to solve the linear systems.

Finally, the last step in Algorithm~\ref{alg:mcadmm-general} recovers specified entries of the original partial matrix $X$ from the clique variables $(X^{(1)}, \ldots, X^{(p)})$. This can be done cheaply:
\begin{align}
    \text{svec}(X) \leftarrow H \cD \left[\text{svec}\big(X^{(1)}\big)\,;\, \cdots\, ;\, \text{svec}\big(X^{(p)}\big)  \right], \label{eq.58}
\end{align}
where the matrices $H$ and $\cD$ are as defined in Section~\ref{subsec-overlap-compensation}. The corresponding original dual variables are given by ${\operatorname{svec}(S)=\cD\operatorname{svec}_F(S)}$ and ${\operatorname{svec}(Z)=\cD\operatorname{svec}_F(Z)}$.

In Appendix~\ref{sec-ALM}, we discuss another variant of the above \SGSadmm\!\!-type algorithm where the updates are solved using the semismooth Newton-CG method, similar to that proposed in \cite{YST2015}.

\subsection{McADMM for bounded SDPs of the form \texorpdfstring{\eqref{eq.51}}{(51)}} Algorithm~\ref{alg:mcadmm-general} can be directly specialized for bounded SDPs of the type \eqref{eq.2}. In this case, we expand $\mathscr{F}$ according to \eqref{eq.4} in terms of the functions $f^{(l)}(X^{(l)}) = \delta_{\S^{\cC_l}_+}(X^{(l)})$ for $l = 1, \ldots, p$ whereas $\mathscr{G}$ is simply the indicator function  $\delta_{\L \cap (\N^{\cC_1} \times \cdots \times \N^{\cC_p}) }(X)$. That is, we have used the \textit{linear-space projection} approach derived in Section~\ref{sec-linproj} to improve efficiency. This will lighten the computational load since we can replace the update Steps \textbf{1a}--\textbf{1c} of the McADMM in Algorithm~\ref{alg:mcadmm-general} with a single ``average and project'' step given by \eqref{eq.46}.

Let us first specify the augmented Lagrangian for the reformulated problem, i.e., for the dual of \eqref{eq.51} given by:
\begin{align}\label{eq.59}
    d^*&=\max_{y,Z,S} \hspace{0.1cm} \langle b,y \rangle -  \sum_{l=1}^p  \delta^*_{ \S^{\cC_l}_+ }\left(- S^{(l)}\right)
    -  \delta^*_{ \L \cap (\N^{\mathcal{C}_1} \times \cdots \times \N^{\mathcal{C}_p})}\left(- Z^{(1)}, \ldots , - Z^{(p)}\right)\nonumber\\[5pt]
    &\qquad \text{s.t.} \quad S^{(l)} = C^{(l)} - \sum_{i=1}^m y_i A^{(l)}_i - Z^{(l)} \hspace{0.2cm} \text{for } l = 1, \ldots, p .
\end{align}
We have
\begin{align}\label{eq.60}
    L_\sig(y,S,Z; X) =  & \; \inprod{-b}{y}  +\sum_{l=1}^p  \delta^*_{ \S^{\cC_l}_+ }\left(- S^{(l)}\right) \;+\;  \delta^*_{ \L \cap (\N^{\mathcal{C}_1} \times \cdots \times \N^{\mathcal{C}_p})}\left(- Z^{(1)}, \ldots , - Z^{(p)}\right)\nonumber
    \\[5pt]
    &+  \sum_{l=1}^p  \frac{\sig}{2}\norm{(\cA^{(l)})^* y + S^{(l)} + Z^{(l)} -C^{(l)} + \sig^{-1} X^{(l)}}^2 -  \sum_{l=1}^p  \frac{1}{2\sig}\norm{X^{(l)}}^2.
\end{align}

\IncMargin{1em}
\begin{algorithm}[tp]
\caption{McADMM for \eqref{eq.51}}
\label{alg:mcadmm-bounded}
\SetAlgoLined
\textbf{Input:} $\sigma>0$ and $\tau\in [1,(1+\sqrt{5})/2)$, and a summable sequence of tolerances $\{\varepsilon_k \geq 0 \}$. \\
\textbf{Initialize data:} $(y^0, S^0,Z^0,X^0)$ such that ${\cA}(X^0)=b$ and $y^0=(\cA\cA^*)^{-1}\cA(C-S^0-Z^0).$\\

Find an appropriate clique assignment $\{\mathcal{C}_1,\ldots,\mathcal{C}_p\}$, e.g., using CMD (Algorithm~\ref{algo-CMD}). \\

\BlankLine
Decompose the parameters $C$ and $\cA$ into $\mathscr{C}$ and $\mathscr{A}$, respectively. Decompose the initial matrix variables $(S^0,Z^0,X^0)$ into $(S^{(l),0},Z^{(l),0},X^{(l),0})$ for $l = 1, \ldots, p$.\\
\BlankLine
Perform the following steps at each iteration $k$.
\BlankLine

\textbf{Step 1. } For each clique $l = 1, \ldots p$, evaluate
\begin{align*}
    {R}_Z^{(l),k} &= \left(\cA^{(l)}\right)^*y^{k}+ S^{(l),k} + Z^{(l),k} - C^{(l)} + \sig^{-1} X^{(l),k}, \\[7pt]
    Q^{(l),k} &= {R}_Z^{(l),k}-Z^{(l),k}.
\end{align*}
After assembling $Q^k=(Q^{(1),k},\ldots,Q^{(p),k})$, for each clique $l=1,\ldots,p$, evaluate
\begin{align*}
    Z^{(l),k+1} &= \sig^{-1}\Pi_{\N^{\cC_l}}\left(\cM_l(\sig Q^k)\right)-Q^{(l),k},
\end{align*}
where $\cM_l$ is defined in \eqref{eq.44}.

\BlankLine
\textbf{Step 2a. } Let $\widehat{h}^k =\sum_{l=1}^p \cA^{(l)} (C^{(l)} - Z^{(l),k+1} - S^{(l),k}){+\sig^{-1}\bigl(b-\mathscr{A}(X^k)\bigr)}$.
Compute 
$ \widehat{y}^{k+1} \approx {G^{-1}} \widehat{h}^k$ such that
$\sigma \norm{\widehat{h}^k- G\widehat{y}^{k+1}  }  \leq  \varepsilon_k$.
\BlankLine
\textbf{Step 2b. }For each clique $l = 1, \ldots p$, evaluate \begin{align*}
    R_S^{(l),k} &= \left(\cA^{(l)}\right)^* \widehat{y}^{k+1}  + S^{(l),k}  + Z^{(l),k+1} - C^{(l)}+ \sig^{-1} X^{(l),k} \\[7pt]
    S^{(l),k+1} &= \Pi_{\S^{\cC_l}_+}\left(   S^{(l),k}  - R_S^{(l),k}  \right).
\end{align*}
\BlankLine
\textbf{Step 2c. } Let $h^k = \sum_{l=1}^p \cA^{(l)} (C^{(l)} - Z^{(l),k+1} - S^{(l),k+1})+\sig^{-1}\bigl(b-\mathscr{A}(X^k)\bigr)$. Compute 
$y^{k+1} \approx G^{-1} h^k$ such that 
$\sigma \norm{h^k - G y^{k+1} }  \leq 10 \varepsilon_k$.
\BlankLine
\textbf{Step 3. } For each clique $l = 1, \ldots p$, evaluate
\begin{align*}
X^{(l),k+1} &=  X^{(l),k} + \tau \sigma \left(   Z^{(l),k+1} +S^{(l),k+1} +\left(\cA^{(l)}\right)^* y^{k+1} - C^{(l)} \right).
\end{align*}
\BlankLine
Upon termination, recover original matrices.
\end{algorithm}
\DecMargin{1em}

\medskip

We present the McADMM for \eqref{eq.51} in Algorithm~\ref{alg:mcadmm-bounded}. Once again, most updates on the matrix variables are executed clique-wise and independently, with the exception of operations such as \eqref{eq.46} (Step {\bf 1}) where $\cM_l (V)$ (the \textit{averaging} operation presented in \eqref{eq.44}) means that inter-clique operations are still unavoidable. Nonetheless, we find that these are much cheaper compared to the updates pertaining to Steps \textbf{1a}--\textbf{1c} in Algorithm~\ref{alg:mcadmm-general}, particularly when the number of additional constraints $m^+$ is large (say, when $m^+ > m$).

The primary advantage of McADMM shall be reflected in the computation of $S^{k+1}$ in Step {\bf 2b} of Algorithm~\ref{alg:mcadmm-bounded} given a conducive clique distribution, i.e., based on the clique decomposition strategies discussed in Section~\ref{sec-clique-decomp}. More specifically, by computing the spectral decomposition $ S^{(l),k} - R^{(l),k}_S = Q^{(l)} \diag{\lam^{(l)}_1,\ldots,\lam^{(l)}_{n_l}} (Q^{(l)})^\top$, we can compute $S^{(l),k+1}$ as follows:
\begin{eqnarray}
  S^{(l),k+1} =
  Q^{(l)}  \diag{d^{(l)}_1,\ldots,d^{(l)}_{|\cC_l|}}  (Q^{(l)})^\top,
\label{eq.61}
\end{eqnarray}
where $d^{(l)}_j = \max(\lam^{(l)}_j,0) $, $j=1,\ldots,|\cC_l|$. Furthermore, it is clear that the computation of $S^{(l),k+1}$, $l=1,\ldots,p$, can be done in parallel if such a computing platform is available.

The update step for $Z^{k+1}$ in Step {\bf 1} of Algorithm~\ref{alg:mcadmm-bounded} is obtained as follows:
\begin{eqnarray}
  Z^{k+1} &=& -{\rm Prox}_{\sig^{-1} \mathscr{G}^*} (  R^k_Z-Z^k) =
\sig^{-1}{\Pi}_{ \L \cap (\N^{\mathcal{C}_1} \times \cdots \times \N^{\mathcal{C}_p})} \big(\sig ( R^k_Z-Z^k)\big) - ( R^k_Z-Z^k),
\label{eq.62}
\end{eqnarray}
where the projection onto the intersection $\L \cap (\N^{\mathcal{C}_1} \times \cdots \times \N^{\mathcal{C}_p})$ can be computed via Proposition~\ref{prop-intersection}.

\section{Numerical experiments} \label{sec-numerical} In this section, we present numerical experiments on a broad range of large-scale structured SDPs to demonstrate the effectiveness, scalability, and computational advantages of McADMM relative to existing approaches across diverse problem classes and sparsity patterns, as well as its potential for parallel acceleration.

\subsection{Experimental setting}\label{sec-experimental-setting} \textbf{Implementation.} When running McADMM, the user may select their preferred decomposition scheme which includes no decomposition, CMD decomposition, or the direct non-ordered decomposition as described in Section~\ref{sec-maxnorm} (applicable only on arrowhead patterned SDPs). For all settings involving non-trivial decompositions, one may use either the standard linear constraint based approach (as done in Steps \textbf{1a}--\textbf{1c} of Algorithm~\ref{alg:mcadmm-general}) or our proposed \textit{linear-space projection} method as presented in Algorithm~\ref{alg:mcadmm-bounded} --- the latter is set as the default. Finally, as remarked in the last section, we may perform the updates of the variable $y$ using either a direct method (Cholesky factorization) or an iterative approach (PCG). Note that whenever a decomposition returns a single clique, McADMM automatically reduces to sGS-PADMM.
{For the parameter $\delta$ in Condition 1 of CMD (Algorithm~\ref{algo-CMD}), we use}
\[
{\ell:=\max\{|\cC_l|,|\cC_r|\},\qquad
 g(\ell):=\max\{0.11-0.02\lceil\ell/10\rceil,-0.01\},\qquad
 \delta=g(\ell)}.
\]
{For Conditions 3 and 4 of Algorithm~\ref{algo-CMD}, we use the default values $\tau_2=0.5$ and $\tau_3=30$, respectively.}

\medskip
\noindent\textbf{Baseline Solvers.}
To benchmark McADMM's performance, we also run the sparse SDP solver CDCS \cite{ZFP2020} that is based on the SparseCoLO clique decomposition with the variable $X$ being maintained to impose the consistency constraints across different clique variables $X^{(l)}$, $l=1,\ldots,p$.
Furthermore, to analyze the benefit of the decompositions themselves, all experiments are also solved by sGS-PADMM on \eqref{eq.1} without clique decomposition, which can be easily implemented using McADMM by restricting the number of cliques to one.

\medskip
\noindent\textbf{Computational Environment.}
The {\sc Matlab} experiments reported in Tables~\ref{tab:maxcut-sdplib}--\ref{tab:projection-comparison} were run on a machine with the following specifications:
3rd Gen Intel(R) Core(TM) i7-3770, 3401MHz, 4 Cores, 8 Logical Processors.

\medskip
\noindent\textbf{Stopping Conditions.}
Based on the KKT conditions \eqref{eq.9}, we define the following relative KKT residuals to assess the accuracy of the solutions computed by McADMM and sGS-PADMM:
\normalsize
\[
\begin{array}{rlrl}
\eta_{\rm P}
:=&\max\left\{
\frac{\norm{\mathscr A(X)-b}}{1+\norm{b}},
\frac{\norm{\cL(X)-h}}{1+\norm{h}}
\right\},
\quad 
&\eta_{\rm D}
:= &\frac{\norm{\mathscr A^*y+\cL^*\xi+S+Z-\mathscr C}_{\cX}}
{1+\norm{\mathscr C}_{\cX}},
\\[6pt]
\eta_{\mathscr F}
:=&\frac{\norm{X-{\rm Prox}_{\mathscr F}(X-S)}_{\cX}}
{1+\norm{X}_{\cX}+\norm{S}_{\cX}},
\quad 
&\eta_{\mathscr G}
:=& \frac{\norm{X-{\rm Prox}_{\mathscr G}(X-Z)}_{\cX}}
{1+\norm{X}_{\cX}+\norm{Z}_{\cX}},
\\[6pt]
\eta_{\max}
:=&\max\{\eta_{\rm P},\eta_{\rm D},\eta_{\mathscr F},\eta_{\mathscr G}\}.
\end{array}
\]
\normalsize
{For problem \eqref{eq.51}, the second term in $\eta_{\rm P}$ is absent, and the proximal mappings in $\eta_{\mathscr F}$ and $\eta_{\mathscr G}$ are the PSD projection and the projection in \eqref{eq.46}, respectively. A run achieves the required tolerance when $\eta_{\max}<\varepsilon$.}
For the {\sc Matlab} experiments, the stopping tolerance for the SDP solvers (McADMM, sGS-PADMM, CDCS, and cSDP) is set to $\varepsilon = 1 \times 10^{-5}$ with a maximum running time of $10,000$s.
A superscript $\dagger$ indicates termination at the time limit of $10,000$ seconds, whereas a superscript $\ddagger$ indicates termination at the maximum of $10,000$ iterations.

\medskip
\noindent\textbf{Table Notations.}
Results on all datasets are summarized in Tables~\ref{tab:maxcut-sdplib}--\ref{tab:maxnorm}, where \%nnz denotes the density level of the problem in percentage of the total entries $n^2$. Whenever reported, $n$ denotes the dimension of the single-block SDP matrix supplied to the decomposition, $m$ denotes the number of affine equality constraints in that formulation, and $p$ denotes the number of clique blocks after decomposition. For a problem-specific reformulation such as \eqref{eq.65}, $m$ is counted after the reformulation but before clique decomposition. For the run time, ``Solve'' measures only the time taken to solve the converted SDP problem whereas ``Prep'' measures the time spent in performing clique assignments and executing the decomposition. Unless otherwise specified, all computational times reported in this section are measured in seconds. In Tables~\ref{tab:maxcut-sdplib}--\ref{tab:projection-comparison}, a light-gray cell indicates the shortest solve time among the successfully terminated runs for each instance, and ties at the reported precision are highlighted together.

\subsection{Test problem classes} In our numerical experiments, McADMM is tested on three classes of SDPs that include: large-scale MAXCUT problems, Lov\'{a}sz theta problems, and the max-norm matrix recovery problems.

\medskip
\noindent\textbf{MAXCUT problems.}
The relaxation of the MAXCUT problem can be directly decomposed into the multi-clique form of \eqref{eq.51} (note that bound constraints $\N$ are not present in this problem):
\begin{align}\label{eq.63}
    \min_{X^{(1)}, \ldots, X^{(p)}}   \;\; & \sum_{l=1}^p  \langle C^{(l)},X^{(l)} \rangle \nonumber \\[8pt]
    \text{s.t.} \;\; & 1 \;=\;   \sum_{l=1}^p  \langle E^{ii,(l)}, X^{(l)}\rangle \hspace{0.2cm} \text{for } i=1,\ldots,n\nonumber   \\[5pt]
    & ( X^{(1)}, \ldots,  X^{(p)})  \in  \S^{\mathcal{C}_1}_+ \times \cdots \times \S^{\mathcal{C}_p}_+,
    \qquad
    ( X^{(1)}, \ldots,  X^{(p)}) \in \L ,
\end{align}
where, for $1\leq i\leq j\leq n$, we define
\[
{E^{ij}:=}
\begin{cases}
{e_i e_i^\top}, & i=j,\\[3pt]
{\frac{1}{2}(e_i e_j^\top+e_j e_i^\top)}, & i<j,
\end{cases}
\qquad
{E^{ij,(l)}:=}
\begin{cases}
{|\cO_{ij}|^{-1}E_{\cC_l}E^{ij}E_{\cC_l}^\top},
& l\in\cO_{ij},\\[3pt]
{0_{|\cC_l|\times|\cC_l|}}, & l\notin\cO_{ij}.
\end{cases}
\]
Thus, $\langle E^{ij},X\rangle=X_{ij}$. In its non-decomposed form, $C= - \frac{1}{4}(\text{diag}(Be) -B)$ where $B$ is a given weighted adjacency matrix corresponding to some graph. Note that $|B|$ is the aggregate sparsity matrix that is used to derive our clique decomposition. MAXCUT problem instances in our experiments are extracted from the SDPLIB database \cite{BCH1999} {and the G-set collection \cite{DavisHu2011}}.

\medskip
\noindent\textbf{Lov\'{a}sz theta problems.}
Recall that in Appendix~\ref{sec-theta}, we can convert its original SDP formulation \eqref{eq.32} into a sparse format \eqref{eq.33}. Upon clique decomposition, the latter can be written as
\begin{align}\label{eq.64}
\min_{X^{(1)}, \ldots, X^{(p)}}   \;\; & \sum_{l=1}^p  \langle \widehat{C}^{(l)},X^{(l)} \rangle \nonumber \\[8pt]
\text{s.t.} \;\; & 0 \;=\;   \sum_{l=1}^p  \langle A_k^{(l)}, X^{(l)}\rangle \hspace{0.2cm} \text{for } k = 1, \ldots, |\Theta| \nonumber \\[5pt]
		 & 1 =   \sum_{l=1}^p  \langle A_{|\Theta|+1}^{(l)}, X^{(l)}\rangle \nonumber\\[5pt]
	&	( X^{(1)}, \ldots,  X^{(p)})  \in  \S^{\mathcal{C}_1}_+ \times \cdots \times \S^{\mathcal{C}_p}_+, \qquad
		( X^{(1)}, \ldots,  X^{(p)}) \in \L \cap (\N^{\mathcal{C}_1} \times \cdots \times \N^{\mathcal{C}_p}).
\end{align}
In the above, the bounds in $\N^{\cC_l}$ are given by $U^{(l)} = \infty_{|\cC_l| \times |\cC_l|}$ and $L^{(l)} = -\infty_{|\cC_l| \times |\cC_l|}$. Instances of this problem can be retrieved from the Sloane \cite{SLN2005} and DIMACS \cite{JM1996} datasets. We also consider Lov\'{a}sz-theta SDPs constructed from graphs in the G-set collection \cite{DavisHu2011}. The same G-set graph may therefore appear in a MAXCUT formulation and in a distinct Lov\'{a}sz-theta formulation.

\medskip
\noindent\textbf{Max-norm matrix recovery problems.}
A multi-clique form for max-norm matrix recovery problems has been derived in \eqref{eq.28}. {Following \cite{FLT2018}, we also include $\mu\operatorname{trace}(X)$ in the objective to encourage low-rank solutions.} We further convert the inequalities into equality constraints and write $I^{(l)}:=\sum_{i=1}^{d_1+d_2}E^{ii,(l)}$:
\begin{align}\label{eq.65}
    \min_{\substack{X^{(1)}, \ldots, X^{(2K)},\;\theta,\\
    {r^{(1)},\ldots,r^{(2K)},\;s^{(1)},\ldots,s^{(2K)}}}} &\;\; \lambda \theta + {\mu\sum_{l=1}^{2K}\langle I^{(l)},X^{(l)}\rangle} + \sum_{l=1}^{2K}\left(\norm{{r^{(l)}}}_1 + \delta_{\R^{|\cC_l|}_+}(s^{(l)})\right) \nonumber\\[8pt]
    \text{s.t.} \;\; & \widehat{A}_{ij} - {r^{(l)}_{ij}} =   \langle E^{ij,(l)}, X^{(l)}\rangle  \hspace{0.2cm} \text{for all }(i,j) \in{\widehat{\cJ}} \cap (\cC_l\times \cC_l), \text{ and } l=1,\dots,2K  \nonumber \\[5pt]
    & \theta -  s^{(l)}_{i} =    {X^{(l)}_{ii}}  \hspace{0.2cm}\text{for all } i = 1, \ldots, |\cC_l|, \text{ and } l =1,\dots,2K  \nonumber\\[5pt]
    & ( X^{(1)}, \ldots,  X^{(2K)})  \in  \S^{\mathcal{C}_1}_+ \times \cdots \times \S^{\mathcal{C}_{2K}}_+, \;\;
    ( X^{(1)}, \ldots,  X^{(2K)}) \in \L \cap (\N^{\mathcal{C}_1} \times \cdots \times \N^{\mathcal{C}_{2K}}) ,
\end{align}
using ${r^{(l)}} \in \R^{m_l}$ and $s^{(l)} \in \R^{|\cC_l|}$ for $l = 1, \ldots, 2K$ as auxiliary variables, with $m_l = |{\widehat{\cJ}} \cap (\cC_l\times \cC_l)|$. All bound constraints in \eqref{eq.28} have been encoded in the last set constraint. {To obtain a linear objective, we write each residual as $r=r^+-r^-$, with $r^+,r^-\geq0$, and replace $\norm{r}_1$ by $e^\top(r^++r^-)$.}

We generate $M^*=M_LM_R^\top$, where $M_L\in\R^{d_1\times 10}$ and $M_R\in\R^{d_2\times 10}$ have independent entries drawn uniformly from $(0,1)$, and set $\alpha=\lceil\max_{i,j}M^*_{ij}\rceil$. The observed set $\mathcal J$ is generated using non-uniform row and column weights. For both rows and columns, the first 10\%, the next 10\%, and the remaining 80\% receive weights 2, 4, and 1, respectively. The row and column weights are separately normalized to have mean one, giving $p_i$ and $q_j$. Independent variables $u_{ij}\sim U(0,1)$ are generated and assigned scores $\omega_{ij}=u_{ij}p_iq_j$. For a prescribed sampling parameter $\mathrm{SR}$, let $k=\operatorname{round}(\mathrm{SR}d_1d_2)$ and let $\omega_{(k)}$ denote the $k$th-largest score. The observed set is $\mathcal J=\{(i,j):\omega_{ij}>\omega_{(k)}\}$. Writing $\xi\in\R^{|\mathcal J|}$ for a vector with independent $N(0,1)$ entries, the noisy observations are generated as
\[
A_{\mathcal J}=M^*_{\mathcal J}+0.1\norm{M^*_{\mathcal J}}_2\frac{\xi}{\norm{\xi}_2}.
\]
The regularization parameters are
\[
\lambda=\frac{0.8}{\sqrt{|\mathcal J|}}\sum_{(i,j)\in\mathcal J}|A_{ij}|,
\qquad
\mu=\frac{1.25\lambda}{d_1+d_2}.
\]
We use problem sizes $(d_1,d_2)\in\{(300,100),(500,200),(1200,200),(1200,300)\}$ and target sampling parameters $\mathrm{SR}\in\{0.1,0.15,0.2,0.25,0.3,0.35\}$.

\subsection{Effects of decomposition and comparison with CDCS} Tables~\ref{tab:maxcut-sdplib}--\ref{tab:maxnorm} compare McADMM with sGS-PADMM and CDCS on MAXCUT, Lov\'{a}sz theta, and max-norm matrix recovery problems. They report the problem and decomposition statistics, solve and preparation times, and iteration counts. We next summarize the main observations from these comparisons.

\begin{description}
\item [(I)]{\textbf{Effect of decomposition}}

\begin{enumerate}
    \setlength\itemsep{0.3cm}
    \item[(1)] When compared to the single-block sGS-PADMM, it is not difficult to see that McADMM -- and to a lesser extent, other multi-clique solvers -- generally exhibit stronger improvements on sparser SDP instances. Of course, this dynamic is by no means determined based on the sparsity level alone. We remark that for MAXCUT problems, significant accelerations are typically observed on instances of under 2\% density whereas for the Lovász theta instances the threshold is less definitive.
    \item[(2)] Distribution of entries constituting the sparsity pattern plays an equally important role. Consider the seemingly sparse problems G43 (MAXCUT, 2\%) and 1zc.1024 (Lov\'{a}sz theta, 3.5\%): no improvements from sGS-PADMM can be observed due to the non-conducive aggregate sparsity pattern of the original SDP. For these problems, the aggregate sparsity patterns become dense upon chordal completion and they lead to a large number of dense or overly-large cliques.
    \item[(3)] Instance size is another vital factor; this is well demonstrated in Table~\ref{tab:maxnorm} where the arrowhead decomposition scheme shows a significant acceleration on large problems almost independently of the density level. On the other hand, it barely shows any improvements on the smallest instances even for the low density level of 4\%.
\end{enumerate}

\item[(II)] \textbf{Comparison against CDCS}
\begin{enumerate}
\setlength\itemsep{0.3cm}
\item[(1)] First, we remark that in CDCS, only the PSD projection step is decomposed throughout the algorithm. More precisely, CDCS stores the original variable $X$ and links it with the clique variables $X^{(l)}$ ($l\in[p]$) by adding the \textit{consistency} linear constraints
$E_{\cC_l} X E_{\cC_l}^\top = X^{(l)}$ as in Theorem~\ref{thm-1}
for each $l\in [p].$

\item[(2)] McADMM can be seen to dominate CDCS in general; this is a consequence of McADMM's approach to handle the clique variables directly without relying on the original variable $X$ to maintain the consistency constraints described in \eqref{eq.25}. With a more exhaustive decomposition and using the \textit{linear-space projection} approach in Section~\ref{sec-linproj}, McADMM's subproblems are more efficiently solved and require less memory.

\item[(3)] McADMM often requires longer preparation times, especially on large problems with a large number of constraints. This reflects our slightly more involved conversion procedure which includes running CMD, extracting the clique parameters from $\cA$ and $C$ and compensating the overlaps. Nonetheless, this drawback is typically insignificant compared to the savings from solving the resulting converted SDP.

\item[(4)] McADMM appears to be more robust against elevated density level and aggregate pattern distribution. Consider for instance the MAXCUT problems G43–47; while these instances correspond to unfavourable sparsity patterns, McADMM is not penalized as much as CDCS and still executes the problems within reasonable time.
\end{enumerate}
\end{description}

\setcounter{table}{0}
\renewcommand{\thetable}{6.\arabic{table}}

\begin{footnotesize}
\begin{longtable}{ |c|r|r||    r|r|r|r||     r|r||     r|r|r|}
\caption{Comparison of McADMM, sGS-PADMM, and CDCS on some MAXCUT problems from SDPLIB and the G-set collection.}\label{tab:maxcut-sdplib}\\
\hline
\hline
 \multicolumn{3}{|c||}{}					& \multicolumn{4}{c||}{McADMM} 				&\multicolumn{2}{c||}{sGS-PADMM} 	& \multicolumn{3}{c|}{CDCS}	\\
\hline
Problem 	& \%nnz 			&$n$		&$p$ 	& Solve    			& Prep  	&Iter		& Solve 			&Iter		& Solve		  	&Prep 			&Iter\\
\hline
\endfirsthead
\hline
\hline
 \multicolumn{3}{|c||}{}					& \multicolumn{4}{c||}{McADMM} 				&\multicolumn{2}{c||}{sGS-PADMM} 	& \multicolumn{3}{c|}{CDCS}	\\
\hline
Problem 	& \%nnz 			&$n$		&$p$ 	& Solve    			& Prep  	&Iter		& Solve 			&Iter		& Solve		  	&Prep 			&Iter\\
\hline
\endhead
G43 		& 2.1 		    		&1000    	& 101 	& 437.4 			&2.8 		&1004 	&\best{150.3}			&1110	&6988.4 			&2.0		&10000$^{\ddagger}$\\

G44 		& 2.1 		   		&1000    	& 97 		& 409.5 			&2.7 		&1004 	&\best{165.6}			&1193	&6812.2 			&1.5		&10000$^{\ddagger}$\\

G45 		& 2.1 		  		&1000    	& 97 		& 263.9 			&2.5 		&915		&\best{166.6}			&1210	&7752.1 			&1.6		&10000$^{\ddagger}$\\

G46 		& 2.1 		   		&1000    	& 105 	& 470.5 			&2.9 		&1020 	&\best{165.6}			&1216	&7059.9 			&1.4		&10000$^{\ddagger}$\\

G47 		& 2.1 		    		&1000    	& 99 		& 377.1 			&2.6 		&971 	&\best{169.7}			&1201	&7000.4 			&1.6		&10000$^{\ddagger}$\\
\hline
G51 		& 1.3 		    		&1000   	& 77 		& \best{119.3}			&3.1 		&907 	&223.7			&1606	&2223.4 			&0.7		&10000$^{\ddagger}$\\

G52 		& 1.3 		   		&1000   	& 81 		& \best{115.1} 			&3.5 		&908 	&234.0			&1691	&2003.2			&0.7		&10000$^{\ddagger}$\\

G53 		& 1.3 		   		&1000    	& 81 		& \best{119.1} 			&3.5 		&903 	&196.5			&1511	&2278.5 			&0.7		&10000$^{\ddagger}$\\

G54 		& 1.3 		  		&1000    	& 76 		& \best{104.9} 			&3.4 		&815 	&222.7			&1615	&2497.6 			 &0.8		&10000$^{\ddagger}$\\

G55		& 0.12		 	&5000    	& 728 	& \best{2826.4}		 	&146.9 	&1273 	&10000$^\dagger$	&553		&10000$^\dagger$	&24.8		&2184\\
\hline
G56		& 0.12	 	 	&5000    	& 728 	& \best{2834.4}		 	&147.7 	&1296 	&10000$^\dagger$	&550		&10000$^\dagger$ 	&24.7		&2216\\

G57		&  0.1		   		&5000    	& 319 	& \best{496.2} 		 	&82.9 	&2167 	&10000$^\dagger$	&540		&2172.9 			 &10.5	&6931\\

G32		& 0.25		    	&2000    	& 149 	& \best{95.2} 			&14.7 	&1161 	&3667.3			&3698	&331.1			 &1.7		&3475\\

G33		& 0.25	 	   	&2000    	& 132 	& \best{133.6} 			&12.1 	&1720 	&3727.6			&3709	&387.6 			 &1.6		&3964\\

G34		& 0.25	 	    	&2000    	& 163 	& \best{171.8} 			 &13.6 	&1926 	&4063.4			&4081	&416.1 			&1.7		&3778\\
\hline
G35		& 0.64		   	&2000    	& 183 	& \best{470.8} 			 &16.2 	&1068 	&2088.4			&2015	&8303.5 			&3.1		&10000$^{\ddagger}$\\

G36		& 0.64	 	   	&2000    	& 186 	& \best{572.3} 			 &16.4 	&1220 	&2134.8			&2035	&7355.2			&3.0		&10000$^{\ddagger}$\\

G61		& 0.08 	    		&7000    	& 1039	& \best{7353.2}			 &292.8 	&1408 	&10000$^\dagger$	&199		&10000$^\dagger$ 	&52.6		&999	\\

G62		& 0.07	 	   	&7000    	& 438 	& \best{992.7} 			 &156.1 	&2624 	&10000$^\dagger$	&197		&3936.8 			&20.9		&10000$^{\ddagger}$\\

G63		& 0.18	 	    	&7000    	& 725 	& 10000$^\dagger$ 	&240.4 	&1319 	&10000$^\dagger$	&198		&10000$^\dagger$	&59.9		&429	\\
\hline
G65		& 0.06	 	   	&8000    	& 607 	& \best{1450.9} 			 &247.6 	&2750 	&10000$^\dagger$	&129		&6917.9 			&27.4		&10000$^{\ddagger}$\\

G66		& 0.06		   	&9000    	& 806 	& \best{1862.7} 			 &335.6 	&2929 	&10000$^\dagger$	&91		&10000$^\dagger$	&36.5		&7657\\
\hline
\hline
\end{longtable}
\end{footnotesize}

\begin{footnotesize}
\begin{longtable}{|c|r|r||    r|r|r|r||     r|r||     r|r|r|}
\caption{Comparison of McADMM, sGS-PADMM, and CDCS on some Lovász theta problems from Borchers' collection.}\label{tab:theta-borchers}\\
\hline
\hline
 \multicolumn{3}{|c||}{}					& \multicolumn{4}{c||}{McADMM} 					&\multicolumn{2}{c||}{sGS-PADMM} & \multicolumn{3}{c|}{CDCS}			\\
\hline
Problem 		& \%nnz 		&$m$		&$p$ 	  & Solve    		& Prep  			&Iter		& Solve 	 	&Iter	& Solve		  		&Prep 	&Iter\\
\hline
\endfirsthead
\hline
\hline
 \multicolumn{3}{|c||}{}					& \multicolumn{4}{c||}{McADMM} 					&\multicolumn{2}{c||}{sGS-PADMM} & \multicolumn{3}{c|}{CDCS}			\\
\hline
Problem 		& \%nnz 		&$m$		&$p$ 	  & Solve    		& Prep  			&Iter		& Solve 	 	&Iter	& Solve		  		&Prep 	&Iter\\
\hline
\endhead
1dc.64 		& 30.3 		&544	    	& 2 		  & 3.5 		&0.1 				&3657 	& \best{2.7}			&1809&33.2 			 	&0.3		&10\\

1et.64 		& 17.1 		&265  	& 2 		  & 0.8 		&0.0  				&842		& \best{0.5} 			&399	&0.6 			 		&0.0		&362	\\

1tc.64 		& 13.7		&193 	& 2 		  & 1.1 		&0.0				&1081	& \best{0.6}			&395	&2.3 			 		&0.0		& 1190 \\

1dc.128 		& 20 			&1472  	& 4 		  & 7.1 		&0.0				&3050 	& \best{4.2}			&1724&117.4 		 		&0.0		&10000$^{\ddagger}$\\

1et.128 		& 10.4 		&673 	& 3 		  & \best{1.0}		&0.0				&716		& 1.1			&426	&2.1 			 		&0.1		&575	\\
\hline
1tc.128		& 8.5 			&513	    	& 4 		  & \best{1.7} 		&0.0	 			&1269	& 2.8 			&1131&2.0			 	 	&0.0		&505		\\

1zc.128		& 15.8 		&1121 	& 3 		  & 3.6 		&0.2  				&3021	& \best{1.5} 			&256&44.5			 		&0.3		&3751\\

1dc.256		& 12.8 		&3840	 & 16 	 & 79.5 		&0.1				&5595	& \best{19.4}		&2826&362.3 		 		&0.1		&10000$^{\ddagger}$\\

1et.256 		& 6.2 		  	&1665	 & 6 		  & \best{2.2} 		&0.1				&707 	& 7.6			&1211&18.3 			 	&0.1		&1837\\

1tc.256		& 5.1			&1313	& 6 		  & \best{3.2} 		&0.1				&1331 	& 14.9		&2083&11.7 		 	 	&0.0		&1236\\
\hline
1zc.256		& 9.7	 		&2817	& 16 		  & 21.9		&0.1 				&1846 	& \best{1.9}			&317	&75.5 			 	&0.1		&1851\\

1dc.512		&  8			&9728	& 31 		  & 161.4 		&0.3 				&3285 	& \best{86.7}		&2449&2079.2 		 		&0.4		&10000$^{\ddagger}$\\

1et.512		& 3.6		 	&4033	& 14 		  & \best{7.3} 		&0.2	 			&790 	& 42.7		&1359&60.4			 	&0.2		&1741\\

1tc.512		& 3.1	 	 	&3265	& 13 		  & \best{18.8} 		&0.3 				&2028	& 79.3		&2540&99.5 			 	&0.2		&3811\\

2dc.512		& 42	 	  	 &54896	& 6 		  & 206.3 		&0.2 				&3692	& \best{102.0}		&2938&5000.3 		 		&0.7		&10000$^{\ddagger}$\\
\hline
1zc.512		& 5.9		   	&6913 	& 37 		  & 53.1 		&0.4 				&5288	& \best{30.5}		&991	&131.9 		 		&0.4		&633	\\

1dc.1024		& 4.9	 	   	&24064	& 57 		  & \best{560.8} 		&1.2 				&1863	& 592.6		&2909&3876.0		 	 	&2.2		&10000$^{\ddagger}$\\

1et.1024		& 2.1 	 		&9601	& 36	 	  & \best{68.5} 		&1.1  				&1391	& 284.7 		&1440&398.9		 		&0.7		&2906\\

1tc.1024		& 1.8	 	   	&7937	& 36		  & \best{56.0} 		&1.2 				&1480 	& 599.6		&3042 &240.8			 	&0.6		&2815	\\

1zc.1024		& 3.5	 	    	&16641 	& 76 		  & 1573.9		 &1.5 				&4927 	& \best{366.9}		&1690 &2700.2		 	 	&1.7		&2908			\\
\hline
2dc.1024		& 32.5	 	&169163 	& 8 		  & 1054.3 		 &0.8 				&2360 	& \best{478.2}		&2241&10000$^\dagger$ 		&3.5		&4193\\

1dc.2048		& 2.9		   	 &58368 	& 108 	 & \best{4549.9} 		 &5.5 				&3022 	& 5967.6		&4155&10000$^\dagger$ 		&10.0		&1462\\

1et.2048		& 1.2	   	    	&22529	& 99 		  & \best{432.7} 		&4.5 				&2388 	& 4123.0		&2934&799.9	 	 		&2.5		&1681\\

1tc.2048		& 1.0		   	&18945 	& 93 		  & \best{215.6} 		&5.2 				&1650 	& 4105.3		&2970&714.5		 	 	&2.2		&2922	\\

1zc.2048		& 1.0		   	&39425	& 157 	  & \best{1980.6} 		 &7.0 				&1251	& 2649.8		&1757&7879.8 		 		&10.4		&1075\\
\hline
2dc.2048		& 24.0		&504452  	& 12 		  & 9066.2 		 &5.4 				&3347	& \best{3779.3}		&2552&10000$^\dagger$ 		&21.8		&450	\\
\hline
\hline
\end{longtable}
\end{footnotesize}

\begin{footnotesize}
\begin{longtable}{|c|r|r||    r|r|r|r||     r|r||     r|r|r|}
\caption{Comparison of McADMM, sGS-PADMM, and CDCS on Lovász theta problems derived from DIMACS and G-set graph instances.}\label{tab:theta-dimacs-gset}\\
\hline
\hline
 \multicolumn{3}{|c||}{}								& \multicolumn{4}{c||}{McADMM} 							&\multicolumn{2}{c||}{sGS-PADMM} 		& \multicolumn{3}{c|}{CDCS}			\\
\hline
Problem 			& \%nnz 		&$(n,m)$		&$p$ 	  & Solve    		& Prep  			&Iter	& Solve 	 	&Iter	& Solve		  &Prep 	&Iter	\\
\hline
\endfirsthead
\hline
\hline
 \multicolumn{3}{|c||}{}								& \multicolumn{4}{c||}{McADMM} 							&\multicolumn{2}{c||}{sGS-PADMM} 		& \multicolumn{3}{c|}{CDCS}			\\
\hline
Problem 			& \%nnz 		&$(n,m)$		&$p$ 	  & Solve    		& Prep  			&Iter	& Solve 	 	&Iter	& Solve		  &Prep 	&Iter	\\
\hline
\endhead
MANN-a27  		& 1.8 		    &(379,703)	    		& 15 		  & \best{7.6} 			 &0.7 				&729 & 24.6		&1669&12.8 			 &0.2		&838	\\

johnson8-4-4 	& 26.4 		&(71,561)  			& 1 		  & \best{0.2} 			 &0.0  			&100	& \best{0.2} 			&100	&1.3 			 &0.1		&241\\

johnson16-2-4 	& 25.4 		&(121,1681)  		& 4 		  & 1.8 			 &0.0				&487	& \best{0.1}			&64&2.4 			 &0.1		&355\\

san200-0.7-1	& 31.0 		&(201,5971)  	 	& 15 		  & 13.4 			 &0.0				&901& \best{4.3}			&798	&663.7		 &0.1		&10000$^{\ddagger}$\\

sanr200-0.7 	& 31.3 		&(201,6033) 		& 14 	 	  & 38.2 			 &0.1				&1785& \best{2.1}		&361	&95.5 			 &0.2		&2595\\
\hline
c-fat200-1		& 92.4 		&(201,18367)	    	& 1 		   & \best{2.2}			 &0.0  			&363	& \best{2.2} 			&363 &170.3		 &0.1		&10000$^{\ddagger}$\\

hamming-6-4  	& 66.7 		&(65,1313) 	 	    	& 1 		   & \best{0.1}			 &0.0  			&62& \best{0.1} 			&62&19.0			 &0.0		&7710\\

hamming-8-4  	& 36.8 		&(257,11777)	   	& 16 		  & 88.0 			 &0.1				&2136& \best{0.9}		&113	&43.3 			 &0.1		&628\\

hamming-9-8 	& 2.3 		  	&(513,2305)		& 64 		  & \best{4.4} 			 &1.3				&165 & 50.5		&1825&17.7 			 &0.2		&549\\

hamming-10-2	& 4.7			&(1025,23041)	   	& 64 		  & \best{70.0} 			 &0.9				&347& 93.3		&437	&241.7 		 &1.1		&612	\\
\hline
hamming-7-5-6	& 23.9	 	&(129,1793)	   	& 8 		  & 1.7			&0.0 				&225& \best{0.9}			&361&15.7 			 &0.0		&1410\\

hamming-8-3-4	&  50.0		 &(257,16129)	   	& 2 		  & 7.6 			 &0.0 				&641 & \best{1.0}			&136&88.8 			 &0.1		&1512\\

hamming-9-5-6	& 41.4		  &(513,53761)	  	 & 4 		  & 39.1 			 &0.1	 			&842 & \best{21.8}		&699	&1369.7		 &0.7		&3937\\

brock200-1		& 26.6	 	 &(201,5067)	  	 & 15 	 & 33.9 			 &0.1 				&1501& \best{2.5}		&436	&92.2 			 &0.1		&2559\\

brock200-4		& 35.2	 	 &(201,6812)	 	& 13 		  & 32.3 			 &0.0 				&1467& \best{1.8}		&310	&60.1 			 &0.1		&1976\\
\hline
brock400-1		& 25.7		 &(401,20078) 		& 19 		  & 184.2 			 &0.1 				&1873& \best{7.6}		&368	&378.9 		 &0.3		&2119\\

keller4		& 36.2	 	 &(172,5101)		 & 7 		  & 18.3 			 &0.0 				&1704& \best{1.3}		&417	&124.6		 &0.1		&3561\\

p-hat300-1		& 75.9 	 	 &(301,33918)	   	& 1	 	   & \best{8.1}			 &0.0  			& 619& \best{8.1} 		&619	&742.8		 &0.2		&10000$^{\ddagger}$\\

G43			& 2.3	 	   	 &(1001,9991)		& 93		  & \best{169.1} 			 &2.5 				&423 & 260.8		&1655 &2047.1		 &1.5		&3000\\

G44			& 2.3	 	    	 &(1001,9991) 		& 100 	 & \best{112.3}			 &2.6 				&366 & 259.6		&1653 &1954.7		 &1.5		&2867\\
\hline
G45			& 2.3	 	   	 &(1001,9991) 		& 97 		  & \best{124.5} 			 &2.5 				&447 & 256.8		&1643 &2616.6 		 &1.6		&3349\\

G46			& 2.3		   	 &(1001,9991) 		& 100 	  & \best{164.0} 			 &2.8 				&369& 251.3		&1600 &2069.2 		 &1.5		&3025\\

G47			& 2.3	   	    	 &(1001,9991)		& 96 		  & \best{124.8} 			 &2.6 				&354 & 258.3		&1656 &2141.4		 &1.5		&3039\\

G51			& 1.5		   	 &(1001,5910) 		& 62 		  & \best{92.4} 			 &2.9 				&770 & 526.5		&3299 &424.2 		 &0.8		&1845\\

G52			& 1.5		   	 &(1001,5917)		& 58 		  & \best{311.2} 			 &2.8 				&2633& 766.1		&4965 &565.0		 &0.7		&2757\\
\hline
G53			& 1.5		   	 &(1001,5915)  		& 63 		  & \best{467.2} 			 &2.9 				&3669& 641.4		&4269 &721.1 		 &0.8		&3054\\

G54			& 1.5	   	    	 &(1001,5917)		& 65 		  & \best{162.6} 			 &2.9 				&1304& 992.2		&6348 &1118.8 		 &0.8		&4338\\
\hline
\hline
\end{longtable}
\end{footnotesize}

\begin{footnotesize}
\begin{longtable}{ |c r|r|r||    r|r|r|r||     r|r||     r|r|r|}
\caption{Comparison of McADMM, sGS-PADMM, and CDCS on some synthetic max-norm matrix recovery problems.}\label{tab:maxnorm}\\
\hline
\hline
{} 			&{}		&{} 		& {}		 & \multicolumn{4}{c||}{McADMM} 			&\multicolumn{2}{c||}{sGS-PADMM} 						& \multicolumn{3}{c|}{CDCS}			\\
\hline
$(d_1,d_2)$ 	&SR		& \%nnz  		& $m$ 		&$p$ 		& Solve 	&Prep		&Iter	& Solve 		&Iter	& Solve				& Prep	 	&Iter\\
\hline
\endfirsthead
\hline
\hline
{} 			&{}		&{} 		& {}		 & \multicolumn{4}{c||}{McADMM} 			&\multicolumn{2}{c||}{sGS-PADMM} 						& \multicolumn{3}{c|}{CDCS}			\\
\hline
$(d_1,d_2)$ 	&SR		& \%nnz  		& $m$ 		&$p$ 		& Solve 	&Prep		&Iter	& Solve 		&Iter	& Solve				& Prep	 	&Iter\\
\hline
\endhead
(300,100) 	&0.1		& 4.0 		   	& 3399   	&6 			& \best{19.3}	&0.0			&1020&19.7		&1044 &34.7				& 0.5			&1419\\

{}		 &0.15	& 5.9		   	& 4899   	&6 			& \best{15.7}	&0.0			&788&20.0			&935	&30.8 				& 0.2			&826	\\

{}		&0.2		& 7.5 		   	& 6399   	&6 			& \best{14.3}	&0.0			&701&17.8			&805	&42.4 				& 0.2			&914	\\

{}		&0.25		& 9.6 		   	& 7899   	&6 			& \best{14.1}	&0.0			&662	&17.3			&712	&75.8 				& 0.2			&1136\\

{}		&0.3		& 11.5 		& 9399   	&6 			& 14.0	&0.0			&539&\best{12.3}			&579	&182.3				& 0.3			&1886\\

{}		&0.35		& 13.4 		& 10899  	&6 			& \best{12.9}	&0.0			&493&17.6			&701	&250.5				& 0.2			&2046\\
\hline
(500,200) 	&0.1		& 4.2 		   	& 10699  	&5 			& \best{74.5}	&0.0			&1216&122.1		&1476&127.4				& 0.4			&1874\\

{}		&0.15		& 6.3 		   	& 15699  	&5 			& \best{61.3}	&0.0			&988&103.2		&1272&830.2				& 0.5			&3584\\

{}		&0.2		& 8.3 		   	& 20699  	&5 			& \best{64.8}	&0.0			&1001&109.0		&1230&403.6 				& 0.5			&2309\\

{}		&0.25		& 10.3 		& 25699   	 &5			& \best{56.8} 	&0.0			&820&80.8			&910	&611.6 				& 0.7			&2355\\

{}		&0.3		& 12.4		& 30699     	&5 			& \best{44.7}	&0.0			&640	&68.4			&828	&3344.8 				& 0.9			&6102\\

{}		&0.35		& 14.2	 	& 35699  	 &5 			& \best{37.3}	&0.0			&530&60.2			&727	&3903.6 				& 1.0			&6124\\
\hline
(1200,200)	&0.1		& 2.5		   	& 25399   	&12			& \best{247.0} 	&0.0			&1651&943.6		&2239&401.4 				& 1.4			&1857\\

{}		&0.15		& 3.7 		   	& 37399   	&12 			& \best{218.9}	&0.0			&1407&1000.8		&2274&3347.5 				& 1.8			&6153\\

{}		&0.2		& 5.0 		   	& 49399 	 &12 			& \best{187.0}	&0.0			&1238&1026.9		&2257&8281.7 				& 2.2			&10000$^{\ddagger}$\\

{}		&0.25		& 6.2 		   	& 61399   	&12 			& \best{121.4}	&0.0			&741&662.9		&1449&10000$^\dagger$ 		& 2.5			&8828\\

{}		&0.3		& 7.4 		   	& 73399   	&12			& \best{96.3} 	&0.0			&601&676.8		&1293&10000$^\dagger$ 		& 2.9			&7418\\

{}		&0.35		& 8.6 		   	& 85399   	&12 			& \best{101.5}	&0.0			&597&676.2		&1274& 10000$^\dagger$		& 3.2			&6394\\
\hline
(1200,300)	&0.1		& 3.3 		   	& 37499   	&8 			& \best{379.2}	&0.0		 	&1698&1304.1		&2599&784.8 				& 1.7			&2368\\

{}		&0.15		& 4.9 		   	& 55499   	&8 			& \best{337.9}	&0.0		 	&1420&1352.2		&2531&10000$^\dagger$ 		& 2.7			&9093\\

{}		&0.2		& 6.5 		   	& 73499   	&8			& \best{334.1}  	&0.0		 	&1401&1551.9		&2739&10000$^\dagger$ 		& 3.4			&5958\\

{}		&0.25		& 8.1 		   	& 91499   	&8 			& \best{224.7} 	&0.0		 	&905&974.4		&1642&10000$^\dagger$ 		& 4.1			&4549\\

{}		&0.3		& 9.7 		   	& 109499    &8 			& \best{189.1} 	&0.0		  	&710	&477.6		&866	&10000$^\dagger$ 		& 4.7			&3789\\

{}		&0.35		& 11.3 		& 127499   	&8 			& \best{195.6}	&0.0	  		&747	&702.4		&1235&10000$^\dagger$ 		& 5.5			&3013\\
\hline
\hline
\end{longtable}
\end{footnotesize}

\subsection{Benefits of the linear-space projection method} On top of comparing McADMM with other SDP solvers, we also dedicate a separate set of experiments on some MAXCUT and Lov\'{a}sz theta problems to showcase the distinction between our \textit{linear-space projection} approach presented in Section~\ref{sec-linproj} and the standard linear constraints approach \eqref{eq.25}. The numerical results are summarized in Table~\ref{tab:projection-comparison}.

To facilitate a smoother analysis, we select a number of MAXCUT and Lovász theta instances that represent a wide range of overlap intensity. The latter is a quantity that we can describe in terms of the \textit{link ratio}, defined as
\begin{align}
    \text{Link ratio }(\%) = 100\frac{m^+}{m}, \label{eq.66}
\end{align}
where $m^+ =  \sum_{(\cC_l,\cC_r) \in \cF} |\cC_{lr}|(|\cC_{lr}|+1)/2$ is the number of overlap constraints as discussed in Section~\ref{subsec-2.2}. We also note that some possibly non-trivial amount of preparation time must be set aside to construct the linear constraints in the standard approach -- the time required to construct these linking constraints is reported separately under ``Link Prep.''

\begin{enumerate}
\setlength\itemsep{0.3cm}
\item[(1)] The effect of overlap intensity (Link ratio) on the efficacy of the \textit{linear-space projection} method can be smoothly observed in Table~\ref{tab:projection-comparison}. Note that for instances with very few overlaps -- most notably, SDPs whose conversion returns fewer than 5 cliques -- the two approaches hardly differ in their performance since the consistency constraints require negligible resources. In fact, we found that the standard approach at times slightly outperforms our \textit{linear-space projection}. Nonetheless, this occurrence is infrequent and the \textit{linear-space projection} approach remains the favored option for applications within a general-purpose solver.
\end{enumerate}

\begin{footnotesize}
\begin{longtable}{ |l|r|r|r||    r|r|r||     r|r|r|r|}
\caption{Comparison of McADMM performance: using the standard linear-system approach in Algorithm~\ref{alg:mcadmm-general} versus using our \textit{linear-space projection} method to handle the linking constraints, where Steps 1a--1c in Algorithm~\ref{alg:mcadmm-general} are replaced by Step 1 in Algorithm~\ref{alg:mcadmm-bounded}.}\label{tab:projection-comparison}\\
\hline
\hline
 \multicolumn{4}{|c||}{}											& \multicolumn{3}{c||}{McADMM (Algorithm~\ref{alg:mcadmm-bounded})} 	& \multicolumn{4}{c|}{McADMM-Link (Algorithm~\ref{alg:mcadmm-general})}			\\
\hline
Problem 		& \%nnz &$p$ 	&Link ratio (\%)		& Solve    	& Prep  	&Iter	&Solve 	&Prep	&Link Prep.	&Iter\\
\hline
\endfirsthead
\hline
\hline
 \multicolumn{4}{|c||}{}											& \multicolumn{3}{c||}{McADMM (Algorithm~\ref{alg:mcadmm-bounded})} 	& \multicolumn{4}{c|}{McADMM-Link (Algorithm~\ref{alg:mcadmm-general})}			\\
\hline
Problem 		& \%nnz &$p$ 	&Link ratio (\%)		& Solve    	& Prep  	&Iter	&Solve 	&Prep	&Link Prep.	&Iter\\
\hline
\endhead

$\theta$: 1et.128 	& 10.4 & 3 		&1			& 1.0		&0.0		&716	& \best{0.8}		&0.0		&0.0		&595	\\

$\theta$: 1zc.128	& 15.8 & 3 		&156			& \best{3.6} 		&0.2  		&3021& 6.6 	&0.0		&0.0		&2165\\

$\theta$: 1tc.256	& 5.1	& 6 		&14			& \best{3.2} 		&0.1		&1331 & \best{3.2}	&0.1		&0.0		&1027\\

$\theta$: 1et.256 	& 6.2 	& 6 		&56			& \best{2.2} 		&0.1		&707 & \best{2.2}		&0.1		&0.0		&636	 \\

$\theta$: 1tc.512	& 3.1	 & 13 	&308			& \best{18.8} 	  &0.3 	&2028& 20.3	&0.3		&0.0		&1721\\
\hline
$\theta$: san200-0.7-1& 31.0 	 & 15 	&464			& \best{13.4} 	&0.0		&901 & 37.4	&0.1		&0.1		&1929\\

$\theta$: MANN-a27& 1.8 & 15 		&652  		& \best{7.6} 		 &0.7 		&729 & 24.9	&0.7		&0.1		&2317\\

$\theta$: brock400-1& 25.7& 19 		&1361  		& \best{184.2} 	&0.1 		&1873& 248.9	&0.1		 &2.8		&1685\\

$\theta$: G45		& 2.3	& 97 		&6426  		& \best{124.5} 	&2.5 		&447 & 206.6	&2.5		 &7.0		&646	\\

$\theta$: G47		& 2.3	 & 96 		&9356		& \best{124.8} 	&2.6 		&354 & 209.8	&2.6		 &11.5	&482	\\

$\theta$: G43		& 2.3	 & 93		&10933  		& \best{169.1} 	&2.5 		&423 & 320.1	&2.6 	 	 &15.2	&631	\\

$\theta$: G46		& 2.3& 100 	&12339  		& \best{164.0} 	&2.8 		&369& 314.3	&2.7		 &17.4	&550	\\
\hline
MC: G57		&  0.1		& 319 	&3148		& \best{496.2} 	&82.9 	&2167 &898.0	&82.2		&4.4		&5441\\

MC: G36		& 0.64& 186 	&54086		& \best{572.3} 	&16.4 	&1220&916.6	&16.4		&17.7		&1651\\

MC: G55		& 0.12& 728 	&78149		& \best{2826.4}	&146.9 	&1273 &4007.9	&158.7	&69.0		&1607\\

MC: G44 		& 2.1 	& 97 		&103067		& \best{409.5}	&2.7 		&1004 &604.7	&2.6		&13.6		&1206\\

MC: G43 		& 2.1 & 101 	&109588		& \best{437.4} 	&2.8 		&1004 &598.3	&2.8		&14.7		&1140\\

MC: G61		& 0.08 & 1039	&112660		& \best{7353.2}	&292.8 	&1408 &9081.9	&295.2	&152.4	&1601\\

MC: G46 		& 2.1 	& 105 	&122323		& \best{470.5}	&2.9 		&1020 &702.9	&2.9		&17.1		&1195\\
\hline
\hline
\end{longtable}
\end{footnotesize}

\subsection{Julia CPU and GPU implementation}\label{sec:julia-gpu} In addition to the {\sc Matlab} implementation used in Tables~\ref{tab:maxcut-sdplib}--\ref{tab:projection-comparison}, McADMM is also implemented in Julia \cite{Bezanson2017} with CPU and CUDA-enabled GPU execution through CUDA.jl \cite{Besard2019}. Clique construction and sparse-SDP conversion are performed on the CPU. When the aggregate sparsity pattern does not yield a useful multi-clique decomposition, the implementation retains the original single-block problem and follows the sGS-PADMM approach. Otherwise, it uses the McADMM linear-space projection on the converted clique blocks.

The CPU implementation uses double precision. The GPU implementation uses single precision for the principal device-side solver arrays, while retaining double precision where required by preprocessing and final diagnostics. The overlap projection, affine updates, and PSD projections are performed on the GPU. PSD projections over collections of small clique blocks are grouped by size or padded into compatible buckets and evaluated with batched eigensolvers on concurrent CUDA streams. Larger blocks use the general eigensolver path.

The experiments were conducted in Julia 1.11.7 on a machine with Intel Xeon Gold 6459C CPUs and NVIDIA GeForce RTX 4090 GPUs with 48~GiB of memory each.

Tables~\ref{tab:julia-main} and \ref{tab:julia-large-theta} report Julia CPU and GPU solve times. Loading, clique construction, conversion, and host--device preparation are excluded from the solve time. Each timed invocation receives a fresh copy of the solver inputs, and the reported value is the minimum BenchmarkTools sample with one solver evaluation per sample. These timings are intended as an internal CPU--GPU comparison of the Julia implementation. They are not directly compared with the {\sc Matlab} solver timings because the software environments, arithmetic precision, hardware, and stopping tolerances differ. The Julia experiments use the same relative KKT residual, with the tolerance specified in each table caption.

For the principal theta set, the GPU is faster on 60 of 62 instances and the median CPU-to-GPU speedup is 9.81. For the ten larger G-set-derived theta instances, the GPU version is faster in every case, with a median speedup of 5.68. For the 24 max-norm instances, the GPU solve time is lower in every displayed row. The median speedup is 4.00 over all 24 instances and approximately 2.95 over the 14 instances for which both the CPU and GPU solvers satisfy the stopping tolerance of $10^{-4}$. The few small theta instances that favor CPU execution do not provide enough work to amortize GPU launch and eigensolver setup costs.

In the tables, a superscript $\ast$ marks a run that did not achieve the strict tolerance $10^{-4}$. We observed this status varies across repeated single-precision GPU runs, i.e., the same problem is sometimes partially solved and sometimes fully. Gray shading denotes the shorter solve time in each row.

\footnotesize
\begin{longtable}{|l|r|r|r|r||r|r||r|r|}
\caption{Julia CPU and GPU results on some Lovász theta and max-norm matrix recovery problems at tolerance $10^{-4}$.}\label{tab:julia-main}\\
\hline
\multicolumn{5}{|c||}{} &
\multicolumn{2}{c||}{CPU} &
\multicolumn{2}{c|}{GPU} \\
\hline
Problem & \%nnz & $n$ & $m$ & $p$ &
Iter & Solve & Iter & Solve \\
\hline
\endfirsthead
\hline
\multicolumn{5}{|c||}{} &
\multicolumn{2}{c||}{CPU} &
\multicolumn{2}{c|}{GPU} \\
\hline
Problem & \%nnz & $n$ & $m$ & $p$ &
Iter & Solve & Iter & Solve \\
\hline
\endhead
theta4 & 11.1 & 201 & 1949 & 1 & 459 & 8.85 & 527 & \best{1.13} \\
theta6 & 10.6 & 301 & 4375 & 1 & 494 & 17.70 & 494 & \best{1.71} \\
theta8 & 10.6 & 401 & 7905 & 1 & 462 & 28.91 & 462 & \best{2.29} \\
theta10 & 10.5 & 501 & 12470 & 1 & 466 & 49.16 & 466 & \best{3.20} \\
theta12 & 10.5 & 601 & 17979 & 1 & 418 & 70.40 & 418 & \best{3.32} \\
theta42 & 31.1 & 201 & 5986 & 1 & 152 & 3.53 & 152 & \best{0.41} \\
theta62 & 30.6 & 301 & 13390 & 1 & 158 & 7.78 & 158 & \best{0.82} \\
theta82 & 30.4 & 401 & 23872 & 1 & 147 & 11.65 & 147 & \best{1.37} \\
theta83 & 50.3 & 401 & 39862 & 1 & 107 & 8.16 & 108 & \best{2.24} \\
theta102 & 30.5 & 501 & 37467 & 1 & 152 & 18.21 & 152 & \best{2.72} \\
theta103 & 50.4 & 501 & 62516 & 1 & 97 & 10.88 & 97 & \best{0.64} \\
theta104 & 70.1 & 501 & 87245 & 1 & 105 & 12.19 & 105 & \best{0.71} \\
theta123 & 50.3 & 601 & 90020 & 1 & 109 & 20.85 & 109 & \best{0.77} \\
theta162 & 40.1 & 801 & 127600 & 1 & 114 & 38.86 & 114 & \best{1.20} \\
\hline
MANN-a27 & 1.8 & 379 & 703 & 15 & 315 & \best{1.44} & 315 & 2.34 \\
MANN-a45 & 0.66 & 1036 & 1981 & 43 & 527 & \best{8.34} & 529 & 12.16 \\
brock200-1 & 26.6 & 201 & 5067 & 1 & 247 & 4.60 & 247 & \best{0.62} \\
brock200-4 & 35.2 & 201 & 6812 & 1 & 147 & 2.70 & 147 & \best{0.41} \\
brock400-1 & 25.7 & 401 & 20078 & 1 & 228 & 16.88 & 228 & \best{1.79} \\
brock800-1 & 35.3 & 801 & 112096 & 1 & 117 & 40.60 & 117 & \best{1.12} \\
brock800-4 & 35.3 & 801 & 111958 & 1 & 123 & 41.66 & 123 & \best{1.13} \\
c-fat200-1 & 92.4 & 201 & 18367 & 1 & 484 & 5.79 & 484 & \best{2.54} \\
c-fat500-10 & 62.8 & 501 & 78124 & 1 & 1159 & 60.17 & 1159 & \best{5.94} \\
hamming10-4 & 17.3 & 1025 & 89601 & 1 & 92 & 47.74 & 92 & \best{1.31} \\
johnson32-2-4 & 12.7 & 497 & 14881 & 1 & 89 & 4.08 & 83 & \best{0.67} \\
keller4 & 36.2 & 172 & 5101 & 1 & 219 & 7.20 & 219 & \best{0.49} \\
keller5 & 25.1 & 777 & 74711 & 1 & 293 & 123.27 & 294 & \best{2.12} \\
p-hat300-1 & 75.9 & 301 & 33918 & 1 & 254 & 10.97 & 254 & \best{3.28} \\
p-hat1000-1 & 75.6 & 1001 & 377248 & 1 & 220 & 108.21 & 221 & \best{2.81} \\
san200-0.7-1 & 31.0 & 201 & 5971 & 1 & 1911 & 30.84 & 1656 & \best{4.86} \\
san1000 & 50.0 & 1001 & 249001 & 1 & 142 & 42.35 & 142 & \best{1.85} \\
sanr200-0.7 & 31.3 & 201 & 6033 & 1 & 172 & 3.89 & 172 & \best{0.46} \\
G43 & 2.3 & 1001 & 9991 & 1 & 972 & 258.36 & 972 & \best{10.37} \\
G44 & 2.3 & 1001 & 9991 & 1 & 972 & 256.26 & 972 & \best{10.39} \\
G45 & 2.3 & 1001 & 9991 & 1 & 976 & 256.79 & 967 & \best{10.42} \\
G46 & 2.3 & 1001 & 9991 & 1 & 999 & 261.81 & 1015 & \best{10.65} \\
G47 & 2.3 & 1001 & 9991 & 1 & 981 & 250.67 & 974 & \best{10.45} \\
G51 & 1.5 & 1001 & 5910 & 82 & 590 & 63.01 & 596 & \best{11.89} \\
G52 & 1.5 & 1001 & 5917 & 86 & 1162 & 132.32 & 1167 & \best{23.48} \\
G53 & 1.5 & 1001 & 5915 & 86 & 951 & 111.20 & 1101 & \best{85.80} \\
G54 & 1.5 & 1001 & 5917 & 86 & 1127 & 117.18 & 1130 & \best{20.26} \\
\hline
1dc.128 & 20.0 & 129 & 1472 & 1 & 412 & 4.49 & 412 & \best{0.82} \\
1dc.512 & 8.0 & 513 & 9728 & 1 & 1036 & 139.88 & 1036 & \best{5.61} \\
1dc.1024 & 4.9 & 1025 & 24064 & 1 & 1059 & 567.37 & 1060 & \best{13.41} \\
1dc.2048 & 2.9 & 2049 & 58368 & 1 & 1432 & 2446.21 & 1532 & \best{38.04} \\
1et.128 & 10.4 & 129 & 673 & 1 & 331 & 7.31 & 332 & \best{0.63} \\
1et.256 & 6.2 & 257 & 1665 & 10 & 343 & 1.74 & 344 & \best{1.24} \\
1et.512 & 3.6 & 513 & 4033 & 26 & 315 & 4.34 & 317 & \best{1.50} \\
1et.1024 & 2.1 & 1025 & 9601 & 61 & 594 & 26.67 & 580 & \best{6.11} \\
1et.2048 & 1.2 & 2049 & 22529 & 144 & 664 & 105.74 & 664 & \best{23.42} \\
1tc.128 & 8.5 & 129 & 513 & 1 & 1169 & 14.40 & 1169 & \best{2.15} \\
1tc.256 & 5.1 & 257 & 1313 & 7 & 494 & 1.94 & 470 & \best{1.39} \\
1tc.512 & 3.1 & 513 & 3265 & 21 & 564 & 7.36 & 551 & \best{2.65} \\
1tc.1024 & 1.8 & 1025 & 7937 & 57 & 597 & 22.27 & 617 & \best{5.48} \\
1tc.2048 & 1.0 & 2049 & 18945 & 130 & 504 & 54.08 & 525 & \best{12.16} \\
1zc.128 & 15.8 & 129 & 1121 & 1 & 400 & 8.79 & 366 & \best{0.70} \\
1zc.512 & 5.8 & 513 & 6913 & 1 & 817 & 193.39 & 817 & \best{4.04} \\
1zc.1024 & 3.5 & 1025 & 16641 & 1 & 1217 & 855.87 & 1222 & \best{13.06} \\
1zc.2048 & 2.0 & 2049 & 39425 & 1 & 1483 & 4059.58 & 1483 & \best{47.45} \\
2dc.512 & 42.3 & 513 & 54896 & 1 & 469 & 58.89 & 503 & \best{12.98} \\
2dc.1024 & 32.5 & 1025 & 169163 & 1 & 412 & 211.45 & 430 & \best{4.62} \\
2dc.2048 & 24.2 & 2049 & 504452 & 1 & 387 & 645.36 & 454 & \best{13.51} \\
\hline
rnd(300,100)\_0.1 & 4.0 & 400 & 3399 & 6 & 833 & 14.34 & 832 & \best{5.69} \\
rnd(300,100)\_0.15 & 5.9 & 400 & 4899 & 6 & 578 & 9.69 & 579 & \best{3.95} \\
rnd(300,100)\_0.2 & 7.5 & 400 & 6399 & 6 & 514 & 8.68 & 529 & \best{3.61} \\
rnd(300,100)\_0.25 & 9.6 & 400 & 7899 & 6 & 449 & 7.54 & 501 & \best{3.26} \\
rnd(300,100)\_0.3 & 11.5 & 400 & 9399 & 6 & 441 & 7.91 & 381 & \best{2.86} \\
rnd(300,100)\_0.35 & 13.4 & 400 & 10899 & 6 & 440 & 7.86 & 372 & \best{2.72} \\
rnd(500,200)\_0.1 & 4.2 & 700 & 10699 & 5 & 940 & 56.01 & 943 & \best{12.47} \\
rnd(500,200)\_0.15 & 6.3 & 700 & 15699 & 5 & 824 & 48.51 & 817 & \best{11.49} \\
rnd(500,200)\_0.2 & 8.3 & 700 & 20699 & 5 & 851 & 51.96 & 817 & \best{12.25} \\
rnd(500,200)\_0.25 & 10.3 & 700 & 25699 & 5 & 609 & 40.09 & 491 & \best{8.64} \\
rnd(500,200)\_0.3 & 12.4 & 700 & 30699 & 5 & 500 & 32.77 & 480 & \best{10.87} \\
rnd(500,200)\_0.35 & 14.2 & 700 & 35699 & 5 & 369 & 23.37 & 389 & \best{11.07} \\
rnd(1200,200)\_0.1 & 2.5 & 1400 & 25399 & 12 & 1401 & 214.57 & 1516 & \best{53.41$^{\ast}$} \\
rnd(1200,200)\_0.15 & 3.7 & 1400 & 37399 & 12 & 1151 & 165.59 & 1132 & \best{42.92$^{\ast}$} \\
rnd(1200,200)\_0.2 & 5.0 & 1400 & 49399 & 12 & 1203 & 184.67 & 1151 & \best{56.99$^{\ast}$} \\
rnd(1200,200)\_0.25 & 6.2 & 1400 & 61399 & 12 & 585 & 84.58 & 636 & \best{18.34$^{\ast}$} \\
rnd(1200,200)\_0.3 & 7.4 & 1400 & 73399 & 12 & 480 & 69.59 & 547 & \best{15.75$^{\ast}$} \\
rnd(1200,200)\_0.35 & 8.6 & 1400 & 85399 & 12 & 480 & 72.65 & 688 & \best{17.81} \\
rnd(1200,300)\_0.1 & 3.3 & 1500 & 37499 & 8 & 1451 & 315.92 & 1551 & \best{76.02$^{\ast}$} \\
rnd(1200,300)\_0.15 & 4.9 & 1500 & 55499 & 8 & 1201 & 260.65 & 1378 & \best{65.66$^{\ast}$} \\
rnd(1200,300)\_0.2 & 6.5 & 1500 & 73499 & 8 & 1251 & 279.86 & 1245 & \best{36.89} \\
rnd(1200,300)\_0.25 & 8.1 & 1500 & 91499 & 8 & 507 & 108.10 & 576 & \best{15.91$^{\ast}$} \\
rnd(1200,300)\_0.3 & 9.7 & 1500 & 109499 & 8 & 501 & 109.86 & 606 & \best{18.01$^{\ast}$} \\
rnd(1200,300)\_0.35 & 11.3 & 1500 & 127499 & 8 & 579 & 132.63 & 656 & \best{23.82$^{\ast}$} \\
\hline
\end{longtable}

\begin{longtable}{|l|r|r|r|r||r|r||r|r|}
\caption{Julia CPU and GPU results on some Lovász theta problems at tolerance $10^{-3}$.}\label{tab:julia-large-theta}\\
\hline
\multicolumn{5}{|c||}{} &
\multicolumn{2}{c||}{CPU} &
\multicolumn{2}{c|}{GPU} \\
\hline
Problem & \%nnz & $n$ & $m$ & $p$ &
Iter & Solve & Iter & Solve \\
\hline
\endfirsthead
\hline
\multicolumn{5}{|c||}{} &
\multicolumn{2}{c||}{CPU} &
\multicolumn{2}{c|}{GPU} \\
\hline
Problem & \%nnz & $n$ & $m$ & $p$ &
Iter & Solve & Iter & Solve \\
\hline
\endhead
G55 & 0.16 & 5001 & 12499 & 358 & 372 & 524.28 & 364 & \best{125.59} \\
G56 & 0.16 & 5001 & 12499 & 358 & 372 & 516.56 & 370 & \best{134.83} \\
G57 & 0.14 & 5001 & 10001 & 197 & 367 & 63.32 & 394 & \best{8.72} \\
G62 & 0.10 & 7001 & 14001 & 280 & 439 & 121.12 & 406 & \best{12.75} \\
G65 & 0.09 & 8001 & 16001 & 340 & 369 & 132.94 & 379 & \best{18.82} \\
G66 & 0.08 & 9001 & 18001 & 396 & 396 & 169.04 & 406 & \best{25.95} \\
G67 & 0.07 & 10001 & 20001 & 438 & 436 & 213.38 & 441 & \best{30.39} \\
G58 & 0.30 & 5001 & 29571 & 485 & 524 & 1176.83 & 589 & \best{320.12} \\
G60 & 0.11 & 7001 & 17149 & 519 & 448 & 1379.93 & 461 & \best{288.29} \\
G63 & 0.21 & 7001 & 41460 & 684 & 554 & 2364.95 & 655 & \best{487.35$^{\ast}$} \\
\hline
\end{longtable}
\noindent Here, $n$ is the dimension of the undecomposed SDP matrix after theta conversion (with $n=d_1+d_2$ for max-norm recovery), while $m$ is the number of original affine equality constraints. A light-gray cell identifies the lower CPU/GPU solve time for that row. The superscript $\ast$ has the meaning stated in Section~\ref{sec:julia-gpu}.

\normalsize
\section{Summary} \label{sec-summary} We proposed {\sc McADMM}, an enhanced first-order SDP solver capable of handling multi-block structured SDPs, particularly those with bound constraints that typically arise from decompositions of sparse large-scale problems. Importantly, the structures of our algorithms are applicable to multi-block SDPs based on \textit{any} type of decomposition, including non-chordal (non-exact) ones. In this regard, \decompadmm\;widens the horizon for decomposition-based first-order algorithms, since the latter is an effective and widely used approach for handling large-scale and constraint-heavy SDPs.

We demonstrated the key differences of our solver compared to other decomposition based algorithms. Our algorithm handles the consistency constraints via a more efficient \textit{linear-space projection} update so that as a consequence, our clique assignment algorithm CMD may focus on {reducing the estimated PSD projection cost} instead of minimizing the overlaps. Based on our numerical results, the combination of these two features allows McADMM to surpass strong decomposition-based solvers such as CDCS which uses the well-established SparseCoLO package for chordal decomposition. The Julia experiments further show that the multi-clique structure can be exploited efficiently on GPUs, with the largest benefits appearing once the PSD projection workload is sufficient to amortize device-side overhead.

Future work may include various generalizations of our solver such as the implementation of acceleration techniques on the updating schemes in Algorithms~\ref{alg:mcadmm-general} and~\ref{alg:mcadmm-bounded}. Possibly a more concrete avenue involves employing a stronger optimizer -- such as the semismooth Newton-CG based solver -- described in Appendix~\ref{sec-ALM} to process the subproblems. One can also consider broader applications of \decompadmm\;on other large-scale problems such as graph matching problems or point cloud registrations.

\begin{appendix}

\section{Conversion of problems with dense aggregate sparsity patterns} \label{sec-theta} Consider an SDP with sparse constraint matrices $A_1, \ldots , A_m$ but a dense cost matrix i.e., the aggregate sparsity pattern of the problem admits a satisfactory clique decomposition if not for the inclusion of the pattern in $C$. Another assumption is that $C$ differs from a sparse matrix only by a low-rank dense matrix:
\begin{align}\label{eq.29}
    C = C^s - VDV^\top,
\end{align}
where $C^s$ is sparse,  $V\in \R^{n\times r}$ is dense and low-rank, and $D \in \S^r$ is positive definite with $r \ll n$. Although these assumptions may seem very specific, they are applicable to a few important applications, e.g., problems with few constraints and $C$ that is itself negative semidefinite and low-rank.

For SDPs with these properties, we may exploit a hidden sparse structure in an indirect way. Note that the PSD constraint in the dual SDP (of \eqref{eq.1}) can generally be written as $S = C -  \cA^* y - Z \in \S^{n}_+$. Using \eqref{eq.29} and the Schur's complement lemma, we can represent this as
\begin{align}\label{eq.30}
    C^s - \cA^* y -Z- VDV^\top \in \S^n_+\;\Leftrightarrow\; \;
    \left[ \begin{array}{cc}
       C^s - \cA^* y-Z & V \\[5pt] V^\top & D^{-1}
    \end{array}\right]  \in \S^{n+r}_+.
\end{align}

With this, the dual SDP is equivalent to
\begin{align}\label{eq.31}
    \max_{y,Z}		\hspace{0.1cm} & \inprod{b}{y} - \delta^*_{\N^{n}} (-Z)\nonumber\\[5pt]
    \text{s.t.} \hspace{0.1cm} & \underbrace{\left[ \begin{array}{cc}
    C^s  & V \\[5pt] V^\top & D^{-1}
    \end{array}\right]}_{\widehat{C}}
    - \sum_{i=1}^m y_i \underbrace{\left[ \begin{array}{cc}
    A_i  & 0 \\[5pt] 0 & 0
    \end{array}\right]}_{\widehat{A}_i}
    -  \underbrace{\left[ \begin{array}{cc}
    Z  & 0 \\[5pt] 0 & 0
    \end{array}\right]}_{\widehat{Z}}
    \in \S^{n+r}_+ ,
\end{align}
where $\widehat{C}, \widehat{A}_i$ and $\widehat{Z}$ are extended data matrices and variables in $\S^{n+r}$.

The aggregate sparsity pattern of this extended problem is then given by
\begin{eqnarray*}
    \left[ \begin{array}{cc}
    |C^s| + \sum_{i=1}^{m} |A_i|   & |V| \\[5pt] |V|^\top & |D^{-1}|
    \end{array}\right],
\end{eqnarray*}
together with the aggregate pattern $M^{\text{bound}}$ from the bounds $L$ and $U$. Clearly, this sparsity pattern is much more conducive for applications in a chordal conversion with the source of density having been transferred to $r \ll n$ rows and columns in the extended matrix.

The solution of the original SDP (\eqref{eq.1} with $C = C^s - VDV^\top$) can be deduced from that of \eqref{eq.31}, as formally stated in the following proposition.

\begin{proposition}
Suppose $\hX$ and $(\hy,\hZ)$ {form a primal-dual KKT pair} for the extended primal and dual SDPs (\eqref{eq.31} and its dual). Write $\hX = \left[ \begin{array}{cc}    \hX_{11}  & \hX_{12} 
\\[5pt] \hX_{12}^\top & \hX_{22} \end{array}\right]$ and $\hZ = \left[ \begin{array}{cc}    \hZ_{11}  & \mathbf{0} 
\\[5pt] \mathbf{0}^\top & 0 \end{array}\right]$
with $\hX_{11}, \hZ_{11} \in \S^n$ and $\hX_{12} \in \R^{n \times r}$. Then $\hX_{11}$ solves the original SDP and $(\hy,\hZ_{11})$ solves its corresponding dual together with $S = C -  \cA^* {\hy} - \hZ_{11} \in \S^n_+$, achieving the same optimal value as the extended problems.
\end{proposition}
\begin{proof}
From the complementarity condition of the extended problem:
\begin{align*}
  \left[ \begin{array}{cc}
   C^s-\cA^*\hy-\hZ_{11}  & V \\[5pt] V^\top & D^{-1}
\end{array}\right]
\left[ \begin{array}{cc}
   \hX_{11}  & \hX_{12} \\[5pt] \hX_{12}^\top & \hX_{22}
\end{array}\right]
= 0,
\end{align*}
we have
\begin{align*}
 (C^s -\cA^*\hy-\hZ_{11})\hX_{11} + V \hX_{12}^\top = 0,\quad
V^\top \hX_{11} + D^{-1}\hX_{12}^\top = 0,\quad V^\top\hX_{12} + D^{-1} \hX_{22} =0.
\end{align*}
Simplifying, we obtain
\begin{align*}
 \hX_{12}^\top = - DV^\top\hX_{11}, \quad \hX_{22} = - D V^\top \hX_{12} =DV^\top \hX_{11} V D.
\end{align*}
Thus, the complementarity condition of the original problem is recovered:
\begin{align*}
0 = (C^s-\cA^*\hy -\hZ_{11} - VDV^\top)\hX_{11} = (C-\cA^*\hy-\hZ_{11})\hX_{11}.
\end{align*}

Furthermore,
\begin{align*}
 \inprod{\widehat{C}}{\hX}  = \inprod{C^s}{\hX_{11}} + 2\inprod{V}{\hX_{12}} + \inprod{D^{-1}}{\hX_{22}} = \inprod{C^s- VDV^\top}{\hX_{11}} = \inprod{C}{\hX_{11}},
\end{align*}
whence it is also straightforward to conclude that $\inprod{A_i}{\hX_{11}} = \inprod{\widehat{A}_i}{\hX} =b_i$ for $i = 1,\ldots,m$. {Feasibility of the extended primal problem also gives $\hX_{11}\in\N^n$.} Moreover, $\hX_{11}$ and $C -  \cA^* {\hy} - \hZ_{11} \in \S^n_+$ are positive semidefinite. This implies that $\hX_{11}$ and $(\hy,\hZ_{11})$ are optimal for the original problems with equal optimal value as claimed.
\end{proof}

One prominent application of this technique is on the Lov{\'a}sz-theta SDP arising from the maximum stable set problem \cite{Lovasz1979}:
\begin{align}\label{eq.32}
\min_{Y } \hspace{0.1cm} & \inprod{-ee^\top}{Y}  \nonumber \\
\text{s.t.}     & \hspace{0.1cm}  \inprod{E^{ij}}{Y} =0  \hspace{0.2cm} \text{for } (i,j) \in \Theta \nonumber\\
		& \hspace{0.1cm}  \inprod{I}{Y} = 1, \quad Y \in \S^{n}_+.
\end{align}
Note that $C=-ee^\top$ is a rank-1 dense matrix of all minus ones, whereas depending on the edges set $\Theta$, the constraint matrices may be very sparse. Hence, it is likely very useful to apply an extension on this problem:
\begin{align}\label{eq.33}
 \min_{\hX } \hspace{0.1cm}	& \inprod {\widehat{C}} {\widehat{X}}\nonumber\\[5pt]
\mbox{s.t.} & \hspace{0.1cm}
\inprod{\widehat{A}_{l(ij)}}{\widehat{X}} =0 \hspace{0.2cm} \text{for } (i,j) \in \Theta \nonumber \\[5pt]
& \;\inprod{\widehat{A}_{|\Theta|+1}}{\widehat{X}} = 1, \quad \widehat{X} \in \S^{n+1}_+,
\end{align}
where once again, matrices such as $\widehat{X}$ are in $\mathbb{S}^{(n+1)}$. Furthermore,
$$
\widehat{C} =\begin{pmatrix} C^s &\mathbf{1} \\  \mathbf{1}^\top & 1  \end{pmatrix}, \quad
\widehat{A}_{l(ij)} = \begin{pmatrix} E^{ij} &\mathbf{0} \\  \mathbf{0}^\top & 0  \end{pmatrix}, \quad  (i,j) \in \Theta,\quad
\widehat{A}_{|\Theta|+1} = \begin{pmatrix} I_n &\mathbf{0} \\  \mathbf{0}^\top & 0  \end{pmatrix},
$$
where $C^s = 0_{n\times n}$, and $l(ij)$ denotes a linear enumeration of the edges $(ij)$ in $\Theta.$

\section{A semismooth Newton-CG based inexact proximal ADMM for solving \texorpdfstring{\eqref{eq.6}}{(6)}} \label{sec-ALM} Here, we give a brief overview of an alternative inexact proximal ADMM framework for solving \eqref{eq.6} that employs a semismooth Newton-CG (SNCG) method to solve the subproblems in each iteration. The resulting method is akin to the majorized semismooth Newton-CG augmented Lagrangian method designed in \cite{YST2015} for solving semidefinite programming problems with nonnegative constraints. However, the advantage of designing our method within the inexact proximal ADMM framework is that it can immediately enjoy convergence results already established in \cite{CST2015}. 

The template of the semismooth Newton-CG-based inexact proximal ADMM (which we call \SNCGadmm\!\!) is given as follows.
\begin{description}
\item[Template \SNCGadmm.] Fix a nonnegative summable sequence $\{ \varepsilon_k\}$. Given an initial iterate $(y^0,\xi^0,S^0,Z^0,X^0)$, perform the following steps in each iteration.
\item[Step 1] Let ${R}_{y,S}^k =  \cL^* \xi^k + Z^k - \mathscr{C} + \sig^{-1} X^k$.
Compute
\begin{eqnarray*}
\hspace{-0.5cm}
  (y^{k+1},S^{k+1}) &\approx&  \mbox{argmin}_{y,S} \; L_\sig (y,\xi^k,S,Z^k;X^k)
\\[3pt]
&=&\mbox{argmin}_{y,S} \Big\{  \inprod{-b}{y} + \mathscr{F}^*(-S) + \frac{\sig}{2}
\norm{\mathscr{A}^* y +S + {R}_{y,S}^k}^2
\Big\}
\end{eqnarray*}
such that the residual to the optimality condition
\begin{eqnarray}
 \delta^{k+1}_{y,S} \in \partial_{(y,S)} L_\sig (y^{k+1},\xi^k,S^{k+1},Z^k;X^k)
 \label{eq.67}
\end{eqnarray}
satisfies $\norm{\delta^{k+1}_{y,S}} \leq \varepsilon_k$.
\item[Step 2] Let
${R}_{\xi,Z}^k = \mathscr{A}^* y^{k+1} +  S^{k+1}  - \mathscr{C} + \sig^{-1} X^k.$
Compute
\begin{eqnarray*}
\hspace{-0.9cm}
 ({\xi}^{k+1},Z^{k+1}) &\approx& \mbox{argmin}_{\xi,Z} \;
 L_\sig (y^{k+1},\xi,S^{k+1},Z;X^k)
\\[3pt]
&=&\mbox{argmin}_{\xi,Z} \Big\{  \inprod{-h}{\xi} + \mathscr{G}^*(-Z) + \frac{\sig}{2}
\norm{\cL^* \xi + Z  + {R}_{\xi,Z}^k}^2
\Big\}
\end{eqnarray*}
such that the residual to the optimality condition
\begin{eqnarray}
 \delta^{k+1}_{\xi,Z} \in \partial_{(\xi,Z)} L_\sig (y^{k+1},\xi^{k+1},S^{k+1},Z^{k+1};X^k)
 \label{eq.68}
\end{eqnarray}
satisfies $\norm{\delta^{k+1}_{\xi,Z}} \leq \varepsilon_k$.
\item[Step 3.] Compute
\begin{eqnarray*}
 X^{k+1} = X^k + \tau \sig (\mathscr{A}^* y^{k+1} + \cL^* \xi^{k+1} + S^{k+1} + Z^{k+1} - \mathscr{C}),
\end{eqnarray*}
where $\tau \in [1,(1+\sqrt{5})/2)$ is the steplength which as usual is chosen to be
$1.618$.
\end{description}

\medskip
The SNCG method is used to efficiently solve the combined subproblems in Steps 1 and 2. For the former, let $W^k(y)  = \mathscr{A}^* y + {R}_{y,S}^k$. By noting that
\begin{eqnarray*}
&& \hspace{-0.7cm}   \mbox{min}_S \Big\{  \sig^{-1} \mathscr{F}^*(-S)  + \frac{1}{2}
\norm{S +W^k(y)}^2
\Big\} = M_{\sig^{-1} \mathscr{F}^*}(W^k(y)),
\end{eqnarray*}
we can solve the subproblem in Step 1 as follows: compute
\begin{eqnarray}
  y^{k+1} &\approx &  \mbox{argmin} \Big\{
\theta^k(y) := \inprod{-b}{y}  + {\sig}M _{\sig^{-1} \mathscr{F}^*}(W^k(y))
\Big\}
\label{eq.69}
\\[5pt]
S^{k+1} & = & -\mbox{Prox}_{\sig^{-1} \mathscr{F}^*} (\mathscr{A}^* y^{k+1} + {R}^k_{y,S}).
\label{eq.70}
\end{eqnarray}
Note that \eqref{eq.69} is an unconstrained minimization problem and its optimality condition is given by:
\begin{eqnarray}
 0= \grad \theta^k (y) = -b + \mathscr{A} {\rm Prox}_{\sig \mathscr{F}}(\sig W^k(y)) .
\label{eq.71}
\end{eqnarray}
In \cite{ZSToh2010,WST2010}, the design of the SNCG for solving a problem of the form \eqref{eq.69} based on the nonsmooth equation \eqref{eq.71} has already been described in detail.

In order to solve the subproblem in Step 1 within the inexactness criterion, we can estimate how accurately \eqref{eq.69} must be solved. The optimality conditions for the subproblem in Step 1 are
\begin{eqnarray}
  0 &=& -b + \sig \mathscr{A} (\mathscr{A}^* y + S + R^k_{y,S})
\label{eq.72}
\\[5pt]
  0 & \in & -\partial \mathscr{F}^*(-S) + \sig (\mathscr{A}^*y + S + R^k_{y,S}).
\label{eq.73}
\end{eqnarray}
From the choice of $S^{k+1}$ in \eqref{eq.70}, one can show that it satisfies the condition \eqref{eq.73} with $y=y^{k+1}$, $S = S^{k+1}$. Now
\begin{eqnarray*}
  \mathscr{A}^* y^{k+1}  + S^{k+1} + R^k_{y,S}
=  W^k(y^{k+1}) - {\rm Prox}_{\sig^{-1}\mathscr{F}^*}(W^k(y^{k+1}))
= \sig^{-1} {\rm Prox}_{\sig \mathscr{F}}(\sig W^k(y^{k+1})).
\end{eqnarray*}
Thus the residual to the conditions \eqref{eq.72}--\eqref{eq.73} is given by
\begin{eqnarray*}
 -b + \mathscr{A} {\rm Prox}_{\sig \mathscr{F}}(\sig W^k(y^{k+1}))
&=  &
-b + \sig \mathscr{A} (\mathscr{A}^* y^{k+1} + S^{k+1} + R^k_{y,S})
\\[5pt]
0 & \in & -\partial \mathscr{F}^*(-S^{k+1}) + \sig (\mathscr{A}^*y^{k+1} + S^{k+1} + R^k_{y,S}).
\end{eqnarray*}
Hence the approximate solution $y^{k+1}$ of \eqref{eq.69} must satisfy the accuracy condition that
$$
  \norm{ b - \mathscr{A} {\rm Prox}_{\sig \mathscr{F}}(\sig W^k(y^{k+1}))} \leq \varepsilon_k.
$$

We should mention that much flexibility is allowed in the design of our algorithm for solving \eqref{eq.6}. For example, in Step 1 of the \SNCGadmm\!\!\!, one may replace it by Steps \textbf{2a-2c} of \SGSadmm\; (in the manner of Algorithm~\ref{alg:mcadmm-general}) without affecting the convergence of the algorithm. The decision of whether to make such a replacement is problem dependent. Typically Step 1 of \SNCGadmm\;should be used if superlinear convergence is observed in the SNCG method for solving \eqref{eq.69}, otherwise it is more economical to replace it by the three \SGSadmm\!\! steps since an SNCG subproblem can be significantly more expensive.

As usual, all the above discussion for Step 1 is also applicable to Step 2, considering $(\xi, Z)$ instead of $(y,S)$.

\end{appendix}

\end{document}